\documentclass[a4paper,12pt]{article}

\usepackage{times,epsfig,wrapfig,amsmath,amsfonts,amssymb,amsthm,latexsym}
\usepackage{algorithm,algorithmic}

%%% Notatoin

%%% Macros

\newcommand{\E}{\ensuremath{\mathbb{E}}}

% from chw
\newcommand{\var}{\mbox{Var}}
\def \trans {^{\top}}

\newtheorem{theorem}{Theorem}[section]
\newtheorem{lemma}[theorem]{Lemma}

\newtheorem{definition}[theorem]{Definition}
\newtheorem{claim}[theorem]{Claim}

\def\vtil{\tilde{v}}

 %sigma for Gaussian kernel
\newcommand{\ignore}[1]{}

\DeclareMathOperator{\clip}{clip}

% Tomer

\def \sumt {\sum_{t\in[T]}}

\def \tr {\trans}
\newcommand{\abs}[1]{\Big| #1 \Big|}

%Dan

\def \Rnxn {\mathbb{R}^{n\times{}n}}
\def \sumim {\sum_{i=1}^m}

\def \sumtT {\sum_{t=1}^T}

\def \ballf {\mathbb{B}_F}

\def \ident {\textbf{I}}

\def \sdpball {\mathbb{S}_{+}}

\usepackage{amsmath}
\usepackage{algorithmic}
\usepackage{algorithm}
\usepackage{amsthm}
\usepackage{amsfonts}
\usepackage{hyperref}
\usepackage{times}
\usepackage{color}

\title{Almost Optimal Sublinear Time Algorithm for Semidefinite Programming}

\author{Dan Garber\\  
\small{Technion - Israel Inst. of Tech.} \\
\small{dangar@cs.technion.ac.il} 
\and Elad Hazan \\
\small{Technion - Israel Inst. of Tech.} \\ 
\small{ehazan@ie.technion.ac.il}}

\begin{document}
\maketitle

\begin{abstract}
\noindent We present an algorithm for approximating semidefinite programs with running time that is sublinear in the number of entries in the semidefinite instance. We also present lower bounds that show our algorithm to have a nearly optimal running time \footnote{This work is a continuation and improvement of the sublinear SDP algorithm in \cite{DBLP:conf/nips/GarberH11}.}.

\end{abstract}

\section{Introduction}
We consider the following problem known as semidefinite programming
\begin{eqnarray}\label{SDP}
\textrm{Find}\quad    X\succeq{0} &   &    \\ \nonumber
\textrm{subject to}\quad  A_i\bullet{}X    &\geq & b_i  \quad i = 1,...,m
\end{eqnarray}
where $\forall{i}\in[m]$, $A_i\in\Rnxn$ is w.l.o.g. symmetric and $b_i\in\mathbb{R}$.  

\begin{definition}[$\epsilon$-approximated solution]
Given an instance of SDP of the form \eqref{SDP}, a matrix $X\in\Rnxn$ will be called an $\epsilon$-approximated solution if $X$ satisfies:
\begin{enumerate}
\item $A_i\bullet{X} \geq b_i - \epsilon \quad \forall{i\in[m]}$
\item $X \succeq -\epsilon\ident$
\end{enumerate}
\end{definition}

The main result of this paper is stated in the following theorem.
\begin{theorem}\label{MainThr}
There exists an algorithm that given $\epsilon > 0$ and an instance of the form \eqref{SDP} such that $\forall{i\in{[m]}}, \Vert{A_i}\Vert_F \leq 1, \vert{b_i}\vert \leq 1$ and there exists a feasible solution $X^*$ such that $\Vert{X^*}\Vert_F \leq 1$, returns an $\epsilon$-approximated solution with probability at least $1/2$.\\
The running time of the algorithm is $O\left({\frac{m\log{m}}{\epsilon^2} + \frac{n^2\log{m}\log{n}}{\epsilon^{2.5}}}\right)$.
\end{theorem}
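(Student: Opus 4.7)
The plan is to invoke a primal--dual minimax framework and implement it with two sublinear subroutines. First, recast feasibility as the saddle-point game
\[
\min_{X\succeq 0,\ \|X\|_F\leq 1}\ \max_{p\in\Delta_m}\ \sum_{i=1}^m p_i(b_i-A_i\bullet X),
\]
whose value is non-positive by the assumed existence of $X^*$, so that any $X$ attaining value at most $\epsilon$ is an $\epsilon$-approximate solution. Then run multiplicative weights (MW) on the dual player over the $m$ constraints: maintain $p_t\in\Delta_m$, and at round $t$ play the primal best response $X_t=v_tv_t^\top$ where $v_t$ is a unit top eigenvector of $M_t=\sum_i p_t(i)A_i$. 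With $T=O(\log m/\epsilon^2)$ rounds and appropriate step size, the standard MW regret bound together with duality implies that $\bar X=\tfrac1T\sum_t X_t$ is $\epsilon$-approximately feasible.

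The two bottlenecks---forming $M_t$ and evaluating $A_i\bullet X_t$ for every $i$---are each attacked by a stochastic primitive. For the eigenvector step I would use a Lanczos-style iteration with randomized matrix--vector products: estimate $M_t u$ by sampling a small batch of constraints $i\sim p_t$ and averaging $A_i u$ at $O(n^2)$ cost per sample, running for $O(\log n/\sqrt\epsilon)$ outer iterations; summed over the $T$ MW rounds this contributes the $n^2\log m\log n/\epsilon^{2.5}$ term. For the MW update I would use $\ell_2$-sampling on $v_t$: draw an index pair $(j,k)$ with probability $v_t(j)^2 v_t(k)^2$, giving the unbiased estimator $A_i(j,k)/(v_t(j)v_t(k))$ for $v_t^\top A_i v_t$, whose second moment equals $\|A_i\|_F^2\leq 1$. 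Reusing the same sample across all $i$ lets us update all $m$ weights in $O(m)$ time per round, yielding the $m\log m/\epsilon^2$ term.

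The delicate part, which I expect to be the technical heart, is gluing these primitives together without losing the optimal $\epsilon$-dependence or compounding failure probabilities. Specifically, the plan requires \textbf{(i)} a ``noisy Lanczos'' lemma asserting that Krylov iterations with bounded-variance matvec estimates still return a $(\epsilon/2)$-additive top eigenvector in $O(\log n/\sqrt\epsilon)$ steps with high probability---natively Lanczos is sensitive to errors, so I expect to need truncation of pathological samples and a careful variance-versus-iteration tradeoff; and \textbf{(ii)} a regret bound for MW driven by estimators that are unbiased and variance-bounded but range-unbounded, for which a Freedman-type martingale argument together with clipping of extreme values should suffice. Given these two ingredients, a union bound across the $T$ MW rounds and the Lanczos inner loop (boosting each sub-step's failure to $O(1/(T\log n))$) secures overall success with probability at least $\tfrac12$, and the MW regret, via duality of the minimax game, converts into the desired $\epsilon$-feasibility of $\bar X$.
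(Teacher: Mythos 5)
Your plan inverts the roles that the paper assigns to the players, and this inversion introduces two genuine gaps. The first is the best-response step itself. Over your stated primal domain $\{X \succeq 0,\ \Vert X\Vert_F \le 1\}$, the maximizer of $\sum_i p_t(i)\,A_i\bullet X$ is \emph{not} the rank-one matrix $v_tv_t^\top$ built from a top eigenvector of $M_t=\sum_i p_t(i)A_i$; it is the normalized positive part of $M_t$, with value $\Vert (M_t)_+\Vert_F$, which can exceed $\lambda_{\max}(M_t)$ by a factor of $\sqrt{n}$. The rank-one response is optimal only over the trace ball $\{X\succeq 0,\ \mathrm{Tr}(X)\le 1\}$, but the theorem only guarantees $\Vert X^*\Vert_F\le 1$, so $\mathrm{Tr}(X^*)$ may be as large as $\sqrt n$ and the per-round inequality your duality argument needs, $\sum_i p_t(i)\bigl(b_i-A_i\bullet X_t\bigr)\le \sum_i p_t(i)\bigl(b_i-A_i\bullet X^*\bigr)+\epsilon/2$, simply fails. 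Concretely, take $m=1$, $A_1=n^{-1/2}I$, $b_1=1$: the instance is feasible via $X^*=n^{-1/2}I$, yet your algorithm always plays a rank-one $X_t$ with $A_1\bullet X_t= n^{-1/2}$, so $\bar X$ violates the constraint by $1-n^{-1/2}\gg\epsilon$. Rescaling to a trace ball of radius $\sqrt n$ repairs feasibility of $X^*$ but blows up the width of the MW losses, and hence $T$, by a factor of $n$. The paper sidesteps this entirely: its primal iterate ranges over the whole Frobenius ball (no PSD constraint), is updated by stochastic online gradient steps using a single sampled constraint $A_{i_t}+Z_t$, and approximate positive semidefiniteness of the output --- which is all the definition of an $\epsilon$-approximated solution requires, namely $\bar X\succeq-\epsilon I$ --- is enforced through the extra dual player $Z$ over the spectrahedron.

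The second gap is your ingredient (i). With a constant-size batch of constraints sampled from $p_t$ per matrix--vector product, each estimate of $M_tu$ carries error of constant norm, whereas inexact power/Lanczos analyses need per-iteration error of order $\epsilon$ to return an $\epsilon/2$-additive extreme eigenvector; pushing the noise down by averaging forces batches of size $\Omega(1/\epsilon^2)$, which inflates your second running-time term to roughly $n^2\log m\log n/\epsilon^{4.5}$ and breaks the claimed bound. So the ``noisy Lanczos'' lemma you hope for is not a known result and is doubtful at the parameters you require. The paper's architecture removes the need for it: since the primal is the low-regret player, $X_t$ is available explicitly (maintained in $O(n^2)$ time per round), and the only eigenvector computation is an exact-matvec Lanczos for the most negative eigenvalue of the known matrix $X_t$, costing $O(n^2\log n/\sqrt{\epsilon})$ per round by the cited eigenvalue-approximation guarantee; the sampled constraint index serves only as an unbiased gradient estimate, not as a substitute for $M_t$. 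Your ingredient (ii), multiplicative weights driven by clipped, variance-bounded $\ell_2$-sampling estimates analyzed with Freedman's inequality, does coincide with the paper's treatment of the dual $p$-player and is sound.
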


Our upper bound is completed by the following lower bound that states that the running time of our algorithm is nearly optimal.
\begin{theorem}\label{LowerBoundThr}
Given an instance of the form \eqref{SDP} such that $\forall{i\in{[m]}}$ $\Vert{A_i}\Vert_F \leq 1$, $\vert{b_i}\vert \leq 1$, any algorithm that with probability at least $1/2$  does the following: either finds a matrix $X$ such that $X$ is an $\epsilon$-approximated solution and $\Vert{X}\Vert_F \leq 1$, or declares that no such matrix could be found, has running time at least $\Omega(\epsilon^{-2}(m+n^2))$.
\end{theorem}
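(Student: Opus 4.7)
The plan is to prove the lower bound by Yao's minimax principle: I exhibit a distribution $\mathcal{D}$ over hard SDP instances on which any deterministic algorithm performing $o(\epsilon^{-2}(m+n^2))$ entry queries fails to correctly answer (feasibility vs.\ $\epsilon$-infeasibility) with probability at least $1/2$, which by the minimax lemma lifts to a matching lower bound for randomized algorithms. I will decompose the claimed bound into its two additive components, $\Omega(m/\epsilon^2)$ and $\Omega(n^2/\epsilon^2)$, and prove each on a separate family of instances that still satisfies the promises $\Vert A_i\Vert_F\le 1$, $|b_i|\le 1$, $\Vert X^*\Vert_F\le 1$; the combined bound then follows from $a+b\ge\max(a,b)$.

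For the $\Omega(m/\epsilon^2)$ piece, I take $n=2$ and use $m$ constraints, each parameterized by an independent hidden bit $h_i\in\{+1,-1\}$. Concretely, for a fixed unit-Frobenius matrix $F$ (say $F = I/\sqrt 2$), each $A_i$ is a symmetric matrix whose entries are biased Rademacher $\pm 1/n$ variables with per-entry bias $\Theta(\epsilon)\cdot h_i$ aligned with $F$, and $b_i$ is a fixed threshold. The system is designed so that an $\epsilon$-approximate solution exists iff all $h_i=+1$ (e.g., by combining the $h_i$-dependent constraint with a global trace-forcing constraint $I\bullet X\ge 1$ that is only compatible with $h_i\equiv +1$). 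The hard distribution is the mixture of (YES) all $h_i=+1$ and (NO) one uniformly random $h_{i^*}$ flipped to $-1$; distinguishing requires locating the flipped coin. A standard KL calculation gives total divergence after $k$ queries at most $O(k\epsilon^2/m)$ (since a query hits $i^*$ with probability $1/m$, and conditional on hitting, its KL contribution is $\Theta(\epsilon^2)$), so $k\ge\Omega(m/\epsilon^2)$ is required for constant-probability success.

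For the $\Omega(n^2/\epsilon^2)$ piece, I take a constant number of constraints and concentrate the signal in a single symmetric $A$ with $\Vert A\Vert_F\le 1$. The hidden signal is a uniformly random rank-one PSD matrix $X^*=uu^\top$ with $u$ drawn uniformly on the unit sphere of $\mathbb{R}^n$, and each entry of $A$ is drawn as a Rademacher $\pm 1/n$ variable biased by $\Theta(\epsilon)\cdot X^*_{ij}$. The threshold $b$ is calibrated so that in the YES case the planted $X^*$ certifies $A\bullet X^*\ge b$ (the bias contributes $\Theta(\epsilon)$), while in the NO case (no bias) one has $\lambda_{\max}(A)=O(1/\sqrt n)$ by Wigner-type spectral bounds on $\pm 1/n$ random symmetric matrices, so no PSD $X$ with $\Vert X\Vert_F\le 1$ reaches $b-\epsilon$. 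Under the uniform mixture over $u$, the single-entry marginals collapse onto the NO distribution (mean zero off-diagonal), so the algorithm must detect the planted rank-one spike through higher-order correlations; a $\chi^2$ expansion over the random direction $u$, organized by cycle decompositions of the multi-graph of queries, shows that the two hypotheses remain statistically indistinguishable unless $k=\Omega(n^2/\epsilon^2)$.

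The main obstacle will be the $\Omega(n^2/\epsilon^2)$ piece: jointly calibrating the per-entry bias magnitude, the hidden direction $u$, and the threshold $b$ so that (i) the YES/NO feasibility gap is an honest $\Omega(\epsilon)$, (ii) the marginal distributions seen by individual queries remain nearly uninformative, and (iii) the $\chi^2$ between the joint query distributions accumulates slowly enough to force $\Omega(n^2/\epsilon^2)$ queries. The PSD-plus-Frobenius constraint on $X$ is a non-local condition and in principle could give an algorithmic shortcut that exploits global spectral structure rather than per-entry information; ruling this out rests on the Wigner spectral gap, which guarantees that in the NO case no PSD matrix of bounded Frobenius norm can accidentally reach the threshold $b$.
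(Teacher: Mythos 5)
There are two genuine gaps, one in each half of your decomposition, and both stem from the same counting issue: a query lower bound can never exceed the total number of input entries of the hard instances, yet your constructions make the input too small. In the $\Omega(m/\epsilon^2)$ part you fix $n=2$, so the whole input has only $O(m)$ entries and an algorithm can read everything in $O(m)$ time; no distribution on such instances can force $\Omega(m/\epsilon^2)$ queries. Worse, with only $O(1)$ Rademacher entries of magnitude $1/2$ per constraint, a per-entry bias of $\Theta(\epsilon)h_i$ shifts $A_i\bullet X$ by $O(\epsilon)$ while the realization-level fluctuation is $\Theta(1)$, so feasibility of the sampled instance is \emph{not} determined by the hidden bits $h_i$: the YES and NO mixtures do not actually have different correct answers, and the KL argument proves indistinguishability of two hypotheses that an algorithm is not required to distinguish. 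To make the bias govern feasibility you must aggregate the $\Theta(\epsilon)$ signal over $\Omega(1/\epsilon^2)$ entries per constraint, i.e.\ you need $n\geq 1/\epsilon$ (this is exactly the paper's assumption). The same counting problem recurs in your $\Omega(n^2/\epsilon^2)$ part: with a constant number of constraints the input has $O(n^2)$ entries, so the claimed bound is impossible for small $\epsilon$; the paper instead uses $m=\Theta(1/\epsilon^2)$ constraint matrices so that the hidden signal lives among $\Theta(n^2/\epsilon^2)$ candidate positions.

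The second half has an additional, independent error: your NO-case infeasibility claim confuses the Frobenius ball with the trace ball. The theorem allows any $X\succeq -\epsilon I$ with $\Vert X\Vert_F\leq 1$, whose trace can be as large as $\sqrt{n}$. For an unbiased symmetric matrix $A$ with $\pm 1/n$ entries, take $P$ the projector onto the eigenvectors of $A$ with eigenvalue at least $1/\sqrt{n}$ (a constant fraction of the spectrum by the semicircle law) and set $X=P/\Vert P\Vert_F$; then $X$ is PSD, $\Vert X\Vert_F=1$, and $A\bullet X\geq \Theta(1)\gg b-\epsilon$. So the bound $\lambda_{\max}(A)=O(1/\sqrt{n})$ does not make the NO instance infeasible, and the whole planted-spike construction collapses. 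The paper's construction sidesteps both problems with a much more elementary device: it hides a single large entry at a uniformly random location (among $\Theta(1/\epsilon^2)$ positions per matrix in the first lemma, and at a permutation-determined position among $\Theta(n^2)$ positions in each of $\Theta(1/\epsilon^2)$ matrices in the second), pads everything else with entries in $[0,\zeta]$ for tiny $\zeta$, and in the NO case deletes one needle; then the needle-less matrix has Frobenius norm at most $\epsilon/2$, so its constraint $A_j\bullet X\geq 1.6\epsilon-\epsilon$ is violated by every $X$ with $\Vert X\Vert_F\leq 1$, while the YES case has an explicit feasible $X^*$, and the lower bound is just that the two instances differ in one uniformly hidden entry which the algorithm must probe with constant probability. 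If you want to salvage your distributional/KL approach, you would need to (i) spread each constraint's signal over $\Omega(1/\epsilon^2)$ entries so feasibility follows the hidden bits with high probability, (ii) use $\Omega(1/\epsilon^2)$ constraints in the $n^2$ part, and (iii) enforce NO-case infeasibility against the Frobenius ball, e.g.\ by shrinking an entire constraint matrix rather than relying on spectral bounds.
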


\section{Preliminaries}

Denote the following sets: 
\begin{eqnarray*}
&&\ballf = \lbrace{X\in{\mathbb{R}^{n\times{}n}} \,|\, \Vert{X}\Vert_F \leq 1}\rbrace \\
&&\Delta_{m+1} = \lbrace{p\in\mathbb{R}^m \, | \, \forall{i}\in{[m]} \, p_i \geq 0, \sumim{}p_i \leq 1}\rbrace \\
&&\sdpball = \lbrace{X \in\Rnxn \, | \, X \succeq{0}, \textrm{Tr}(X) \leq 1}\rbrace
\end{eqnarray*}

We consider the following concave-convex problem  
\begin{eqnarray}\label{MaxMinSDP}
\max_{X\in\ballf}\quad\min_{p\in\Delta_{m+1}, Z\in\sdpball}\sumim{}p_i(A_i\bullet{}X-b_i) + Z\bullet{}X
\end{eqnarray}

The following claim establishes that in order to approximate \eqref{SDP} it suffices to approximate \eqref{MaxMinSDP}.

\begin{claim}
Given a feasible SDP instance of the form \eqref{SDP} let $X\in\ballf$ be such that
\begin{eqnarray*}
\min_{p\in\Delta_{m+1}, Z\in\sdpball}\sumim{}p_i(A_i\bullet{}X-b_i) + Z\bullet{}X \geq - \epsilon
\end{eqnarray*}
Then $X$ is an $\epsilon$-approximated solution.
\end{claim}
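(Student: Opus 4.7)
The plan is to verify the two required conditions of an $\epsilon$-approximated solution separately, each by plugging a well-chosen pair $(p, Z)$ into the inner minimization. Since the minimum over the feasible set is lower-bounded by $-\epsilon$, every particular choice of $(p, Z)$ yields a valid lower bound on the corresponding expression, and this will pin down each condition.

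First, to establish the linear constraints $A_i \bullet X \geq b_i - \epsilon$ for all $i \in [m]$, I would fix an arbitrary index $j \in [m]$ and choose $p = e_j$ (the $j$-th standard basis vector, which lies in $\Delta_{m+1}$ since its entries are nonnegative and sum to exactly $1$) together with $Z = 0 \in \sdpball$. The expression inside the $\min$ then collapses to $A_j \bullet X - b_j$, and since the minimum is at least $-\epsilon$, we obtain $A_j \bullet X - b_j \geq -\epsilon$. Since $j$ was arbitrary, condition (1) holds.

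Second, to establish the approximate PSD condition $X \succeq -\epsilon \ident$, I would choose $p = 0 \in \Delta_{m+1}$ (allowed, since the simplex is defined with $\sum_i p_i \leq 1$, not $=1$). Then the inner expression reduces to $Z \bullet X$, and the hypothesis becomes $\min_{Z \in \sdpball} Z \bullet X \geq -\epsilon$. To deduce $X \succeq -\epsilon \ident$ from this, I would argue by contradiction: if some eigenvalue $\lambda$ of (the symmetric part of) $X$ were strictly less than $-\epsilon$, the rank-one matrix $Z = vv^\top$, where $v$ is the corresponding unit eigenvector, lies in $\sdpball$ and yields $Z \bullet X = v^\top X v = \lambda < -\epsilon$, contradicting the lower bound.

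There is essentially no substantive obstacle, this is a straightforward verification. The only mildly subtle point is ensuring that the chosen $p$ and $Z$ actually belong to the constraint sets $\Delta_{m+1}$ and $\sdpball$ respectively, which is the reason the simplex is written with an inequality $\sum p_i \leq 1$ (so that $p = 0$ is admissible) and why $\sdpball$ contains all unit-trace PSD matrices including rank-one ones. A minor technicality in the PSD step is that $X$ need not be symmetric a priori, but $Z \bullet X$ depends only on the symmetric part $(X + X^\top)/2$, so the eigenvector argument should be applied to this symmetric part, yielding the conclusion for $X$ itself since $X \succeq -\epsilon \ident$ is a statement about the symmetric part.
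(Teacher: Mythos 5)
Your proposal is correct and follows essentially the same route as the paper: test the inner minimum against $p=e_j,\ Z=0$ for each constraint, and against $p=0,\ Z=vv^{\top}$ (unit $v$) for the spectral condition. The paper phrases the second step directly as $v^{\top}Xv\geq -\epsilon$ for all unit $v$ rather than via your contradiction with an eigenvector, but that is the same argument.
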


\begin{proof}
Define $\textrm{Val}(X) = \min_{p\in\Delta_{m+1}, Z\in\sdpball}\sumim{}p_i(A_i\bullet{}X-b_i) + Z\bullet{}X$. For all $i\in{[m]}$ it holds by setting the dual variables to $p_i = 1$, $p_j=0$ $\forall{i\neq j}$ and $Z=\textbf{0}_{n\times{n}}$ that
\begin{eqnarray*}
A_i\bullet{X}-b_i \geq \textrm{Val}(X) \geq -\epsilon
\end{eqnarray*}
Also, for any vector $v\in{\mathbb{R}^n}$ such that $\Vert{v}\Vert_2 \leq 1$ we set the dual variables to $p_i =0$ $\forall{i}$ and $Z = vv^{\top}$ and thus is holds that
\begin{eqnarray*}
v^{\top}Xv \geq \textrm{Val}(X) \geq -\epsilon
\end{eqnarray*}
which implies that $X \geq -\epsilon\textbf{I}$.
\end{proof}

\section{The Algorithm}

In this section we present our algorithm that approximates the max-min objective in \eqref{MaxMinSDP} up to a desired additive factor of $\epsilon$. Our algorithm can be viewed as a primal-dual algorithm that works in iterations, on each iteration performing a primal improvement step and a dual one. For this task we make use of online convex optimization algorithms which are known to be useful for solving concave-convex problems. \\
Consider the function $\mathcal{L}:\ballf\times\Delta_{m+1}\times\sdpball\rightarrow\mathbb{R}$ given by
\begin{eqnarray*}
\mathcal{L}(X,p,Z) = \sumim{}p_i(A_i\bullet{}X - b_i) + Z\bullet{}X
\end{eqnarray*}
The primal variable $X$ is updated by an  online stochastic gradient ascent algorithm which updates $X$ by
\begin{eqnarray*}
X_{t+1} \gets X_t + \eta\tilde{\nabla}_t
\end{eqnarray*}
where $\tilde{\nabla}_t$ is an unbiased estimator for the derivative of $\mathcal{L}(X,p,Z)$ with respect to the variable $X$, that is $\mathbb{E}[\tilde{\nabla}_t | p,Z] = \sumim{}p_iA_i+Z$. the parameter $\eta$ is the step size. Note that after such an update the point $X_{t+1}$ may be outside of the set $\ballf$ and we need to project it back to the feasible set which requires only to normalize the frobenius norm. Since we assume that the matrices $A_i$ are symmetric, then the primal variable $X$ is also always a symmetric matrix.\\
The dual variable $p$ which imposes weights over the constraints is updated by a variant of the well known multiplicative weights (MW) algorithm which performs the following updates:
\begin{eqnarray*}
w_{t+1} \gets w_te^{-\eta(A_i\bullet{}X-b_i)} ,\qquad p_{t+1} \gets \frac{w_{t+1}}{\Vert{w_t}\Vert_1}
\end{eqnarray*}
where $w$ is the vector of weights prior to the normalization to have $l_1$ norm equals 1. This update increases the weight of constraints the are not satisfied well by the current primal solution $X_t$.\\
The MW algorithm produces vectors $p_t$ which lie in the simplex, that is $\sumim{}p_t(i) = 1$. In our case we are interested that the sum of entries in $p_t$ may be less then 1. We enable this by artificially adding an additional constraint to the sdp instance in the form $\textbf{0}_{n\times{}n}\bullet{X} \geq 0$. And run the MW algorithm with dimension $m+1$. By the MW update rule, the size of the entry $p_{m+1}$ is fixed on all iteration and its entire purpose is to allow the sum of the first $m$ entries to be less than 1. The added constraint is of course always satisfied and thus it does not affect the optimization. \\
An additional issue with the MW updates is that it requires to compute on each iteration the products $A_i\bullet{X_t}$ for all $i\in{[m]}$ which takes linear time in the number of entries in the sdp instance. We overcome this issue by only sampling these products instead of using exact computation. Given the matrix $X$ we estimate the product $A_i\bullet{X}$ by
\begin{eqnarray*}
\tilde{v}_i \gets \frac{A_i(j,l)\Vert{X}\Vert_F^2}{X(j,l)} \quad \textrm{with probability} \quad \frac{X(j,l)^2}{\Vert{X}\Vert_F^2}
\end{eqnarray*}
It holds that $\mathbb{E}[\tilde{v}_i | X] = A_i\bullet{X}$. \\
On the down side the estimates $v_i$ are unbounded which is important to get high probability concentration guarantees. We overcome this difficulty by clipping these estimates by taking $v_i \gets \max\lbrace{\min\lbrace{\tilde{v}_i, \eta^{-1}}\rbrace, -\eta^{-1}}\rbrace$. Note that $v_i$ is no longer an unbiased estimator of $A_i\bullet{X}$, however the resulting bias is of the order of $\epsilon$ and thus does not hurt our analysis. Since the values $v_i$ may still be large we use the variance of these variables to get better concentration guarantees. It holds that
\begin{eqnarray*}
\mathbb{E}[v_i^2 | X] \leq \mathbb{E}[\tilde{v}_i^2 | X] = \Vert{A_i}\Vert_F^2\Vert{X}\Vert_F^2
\end{eqnarray*}
Finally the dual variable $Z$, unlike the variables $X,p$ which are updated incrementally, is always locally-optimized by choosing
\begin{eqnarray*}
Z \gets \min_{M\in\sdpball}M\bullet{X}
\end{eqnarray*}
Here we note that in case $X$ is not PSD then without loss of generality $Z$ is always a rank one matrix $zz^{\top}$ such that $z$ is an eigenvector of $X$ corresponding to the most negative eigenvalue of $X$. In case $X$ is PSD then $Z = \textbf{0}_{n\times{}n}$. In any case $\Vert{Z}\Vert_F \leq 1$. $Z$ could be approximated quite fast using an eigenvalue algorithm such as the Lanczos method. It will suffice to find a matrix $Z$ such that the product $Z\bullet{X}$ is $O(\epsilon)$ far from the true minimum. \\
Finally the algorithm returns the average of all primal iterates.

\begin{algorithm}[H]
\caption{SublinearSDP}
\label{Algorithm1}
\begin{algorithmic}[1]
\STATE Input: $\epsilon > 0$, $A_i\in{\mathbb{R}^{n\times{n}}}$, $b_i\in\mathbb{R}$ for $i\in{[m]}$.
\STATE Let $T \gets 20^2\sqrt{40}\epsilon^{-2}\log{m}$, $\eta \gets \sqrt{\frac{40\log{m}}{T}}$, $\epsilon' \gets \epsilon/4$.
\STATE Let $Y_1 \gets ֿ\textbf{0}_{n\times{n}}$, $w_1 \gets \textbf{1}_{m}$.
\STATE Let $A_{m+1} = \textbf{0}_{n\times{n}}$, $b_{m+1} = 0$.
\FOR{$t = 1$ to T}
\STATE $X_t \gets Y_t / \max\lbrace{1, \Vert{Y_t}\Vert_F}\rbrace$.
\STATE $p_t \gets \frac{w_t}{\Vert{}w_t\Vert{}_1+1}$.
\STATE $Z_t \gets Z\in\Rnxn$ s.t. $Z\bullet{}X_t \leq \min_{Z\in{\sdpball}}Z\bullet{X_t} + \epsilonֿ'$.
\STATE $i_t \gets i\in[m]$ w.p. $p_t(i)$ and $i_t \gets m+1$ w.p. $1-\sumim{}p_t(i)$.
\STATE $Y_{t+1}\gets Y_t + \frac{1}{\sqrt{2T}}(A_{i_t} + Z_t)$
\STATE Choose $(j_t,l_t)\in{[n]\times{}[n]}$ by $(j_t,l_t) \gets (j,l)$ w.p. $X_t(j,l)^2/\Vert{X_t}\Vert_F^2$.
\FOR{$i\in{[m]}$}
\STATE $\tilde{v}_t \gets A_i(j_t,l_t)\Vert{X_t}\Vert^2/X_t(j_t,l_t) - b_i$
\STATE $v_t(i) \gets $clip$(\tilde{v}_t(i), 1/\eta)$
\STATE $w_{t+1}(i) \gets w_t(i)(1 - \eta{}v_t(i) + \eta^2v_t(i)^2)$
\ENDFOR
\ENDFOR
\RETURN $\bar{X} = \frac{1}{T}\sum_{t}X_t$
\end{algorithmic}
\end{algorithm}

\section{Analysis}

The following lemma gives a bound on the regret of the MW algorithm (line 15), suitable for the case in which the losses are random variables with bounded variance. For a proof see \cite{CHW_ARXIV} Lemma 2.3.

\begin{lemma}\label{lemma:mw}
The MW algorithm satisfies
\begin{eqnarray*}
\sum_{t\in{[T]}}p_t^{\top}q_t \leq \min_{i\in{[m]}}\sum_{t\in{[T]}}\max\{q_t(i), -\frac{1}{\eta}\} + \frac{\log{m}}{\eta} + \eta\sum_{t\in{[t]}}p_t^{\top}q_t^2
\end{eqnarray*}
\end{lemma}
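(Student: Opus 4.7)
The plan is to run the standard potential-function analysis of multiplicative weights, adapted to (i) the variance-correcting quadratic term in the update rule of line~15 and (ii) losses that may be arbitrarily negative, which is what forces the clipped quantity $\max\{q_t(i),-1/\eta\}$ on the right-hand side of the bound.

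First I would set $\Phi_t=\sum_{i\in[m]}w_t(i)$, so that $\Phi_1=m$ and, because $p_t(i)=w_t(i)/\Phi_t$, the update gives $\Phi_{t+1}=\Phi_t\bigl(1-\eta\,p_t^\top q_t+\eta^2\,p_t^\top q_t^2\bigr)$. Each factor $1-\eta q_t(i)+\eta^2 q_t(i)^2$ is strictly positive (its discriminant is $-3\eta^2<0$), hence so is the weighted average, and $\ln(1+y)\le y$ applies. Telescoping then yields the upper bound
\begin{equation*}
\ln\Phi_{T+1}-\ln m \;\le\; -\eta\sum_t p_t^\top q_t+\eta^2\sum_t p_t^\top q_t^2.
\end{equation*}
On the other side, for any fixed $i^*\in[m]$, monotonicity of the logarithm gives $\ln\Phi_{T+1}\ge \ln w_{T+1}(i^*)=\sum_t\ln\bigl(1-\eta q_t(i^*)+\eta^2 q_t(i^*)^2\bigr)$.

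The crux is then the one-variable inequality $\ln(1-x+x^2)\ge -\max\{x,-1\}$ for all $x\in\mathbb{R}$, which I would prove by splitting into two cases. For $x\ge -1$, consider $f(x)=\ln(1-x+x^2)+x$: its derivative $f'(x)=x(1+x)/(1-x+x^2)$ vanishes only at $x=0,-1$, and a sign analysis shows $f\ge 0$ throughout $[-1,\infty)$. For $x<-1$, one simply notes $1-x+x^2>3>e$, so $\ln(1-x+x^2)>1=-(-1)$. Applying this with $x=\eta q_t(i^*)$ and summing over $t$ yields $\ln\Phi_{T+1}\ge -\eta\sum_t\max\{q_t(i^*),-1/\eta\}$. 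Combining with the upper bound, dividing by $\eta$, and minimizing over $i^*$ gives the claimed inequality. The main obstacle is precisely the case analysis at the threshold $x=-1$: the usual MW argument using $\ln(1+y)\ge y-y^2$ is not tight enough once $|y|$ is allowed to grow, which is exactly why the right-hand side of the lemma must replace $q_t(i)$ by its clipped version.
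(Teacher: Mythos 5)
Your proof is correct: the potential argument with $\Phi_t=\sum_i w_t(i)$, the upper bound via $\ln(1+y)\le y$ applied to the weighted average of the positive factors $1-\eta q_t(i)+\eta^2 q_t(i)^2$, and the pointwise inequality $\ln(1-x+x^2)\ge -\max\{x,-1\}$ (whose two-case verification you carry out correctly) together give exactly the stated regret bound after dividing by $\eta$ and minimizing over $i$. The paper itself does not prove this lemma but cites Lemma 2.3 of Clarkson--Hazan--Woodruff, and your argument is essentially the standard potential-function proof used there, so there is nothing to add beyond noting that your derivation is a complete, self-contained substitute for that citation.
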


The following lemma gives concentration bounds on our random variables from their expectations. The proof is given in the appendix.

\begin{lemma}\label{lemma:concentration}
For $1/4 \geq \eta \geq \sqrt{\frac{40\log{m}}{T}}$, with probability at least $1-O(1/m)$, it holds that
\begin{eqnarray*}
(i) & \max_{i\in{[m]}}\vert{\sum_{t\in{[T]}}\left({A_i\bullet{}X_t - b_i}\right) - v_t(i)}\vert \leq 3\eta{}T \\ 
(ii) & \left|\sum_{t\in{[T]}}\left({A_{i_t}\bullet{}X_t - b_{i_t}}\right)- \sum_{t\in{[T]}}p_t^{\top}v_t\right| \leq 4\eta{}T
\end{eqnarray*}
\end{lemma}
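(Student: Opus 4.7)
My plan is to prove both parts by the same template: decompose the sum into a conditional-bias piece (coming from clipping) and a martingale piece (coming from the sampling noise), bound the bias by a direct calculation using the second-moment control on $\tilde v_t$, and bound the martingale by Freedman's inequality using the per-step bound $1/\eta$ and the variance $\mathbb{E}[v_t(i)^2 \mid \mathcal F_{t-1}] \le \|A_i\|_F^2 \|X_t\|_F^2 + O(1) \le O(1)$. Let $\mathcal F_{t-1}$ denote the $\sigma$-algebra generated by everything produced up to the start of iteration $t$, so that $X_t$, $p_t$ and $Z_t$ are $\mathcal F_{t-1}$-measurable while $i_t, (j_t,l_t), v_t$ are revealed in iteration $t$.

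For part (i), fix $i \in [m]$ and write
\[
\sum_{t} \bigl(A_i\bullet X_t - b_i - v_t(i)\bigr) \;=\; \underbrace{\sum_{t}\bigl(A_i\bullet X_t-b_i - \mathbb{E}[v_t(i)\mid\mathcal F_{t-1}]\bigr)}_{\text{bias}} \;+\; \underbrace{\sum_{t}\bigl(\mathbb{E}[v_t(i)\mid\mathcal F_{t-1}]-v_t(i)\bigr)}_{\text{martingale}}.
\]
The bias is controlled by the identity $\mathbb{E}[\tilde v_t(i)\mid\mathcal F_{t-1}] = A_i\bullet X_t - b_i$ together with the tail estimate $\mathbb{E}\bigl[|\tilde v_t(i)|\mathbf 1_{|\tilde v_t(i)|>1/\eta}\mid\mathcal F_{t-1}\bigr] \le \eta\,\mathbb{E}[\tilde v_t(i)^2\mid\mathcal F_{t-1}] \le O(\eta)$, using the variance computation already recorded in the text; this gives a total bias of $O(\eta T)$. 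The martingale term has increments bounded by $2/\eta$ in absolute value and conditional variance $\mathbb{E}[v_t(i)^2\mid\mathcal F_{t-1}]\le O(1)$, so Freedman's inequality gives, with probability at least $1-O(1/m^{2})$, a deviation of order $\sqrt{T\log m} + \eta^{-1}\log m$, both of which are at most $O(\eta T)$ by the standing assumption $\eta \ge \sqrt{40\log m/T}$. A union bound over $i \in [m]$ then yields (i) with the stated $3\eta T$ constant once the numerical factors are tuned.

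For part (ii) I use the decomposition
\[
\sum_{t}\bigl(A_{i_t}\bullet X_t - b_{i_t}\bigr) - \sum_{t}p_t^{\top}v_t \;=\; \sum_{t}\Bigl(A_{i_t}\bullet X_t - b_{i_t} - p_t^{\top}(A\bullet X_t - b)\Bigr) \;+\; \sum_{t}p_t^{\top}\Bigl((A\bullet X_t - b) - v_t\Bigr),
\]
where $A\bullet X_t - b$ denotes the $m$-vector of constraint residuals. The first sum is a martingale with increments bounded by $2$ (since $|A_{i_t}\bullet X_t|\le 1$ by Cauchy--Schwarz and the dummy coordinate contributes $0$), so Azuma--Hoeffding gives deviation $O(\sqrt{T\log m})\le O(\eta T)$ w.h.p. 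For the second sum I again separate conditional bias and martingale: the bias per step is $\|p_t\|_1\cdot O(\eta)\le O(\eta)$ by the per-coordinate bias bound from (i), giving total $O(\eta T)$; the martingale part has increments bounded by $\|p_t\|_1/\eta \le 1/\eta$ and conditional variance $\mathbb{E}[(p_t^{\top}v_t)^2\mid\mathcal F_{t-1}] \le p_t^{\top}\mathbb{E}[v_t^2\mid\mathcal F_{t-1}] \le O(1)$ via Jensen and the same second-moment bound, and another application of Freedman gives $O(\eta T)$. Combining yields the $4\eta T$ bound.

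The only nontrivial obstacle is keeping the constants honest to reach exactly $3\eta T$ and $4\eta T$ rather than a generic $O(\eta T)$: this requires carefully tracking the factor in the variance bound $\|A_i\|_F^2\|X_t\|_F^2 \le 1$ (and the contribution of $b_i$), the factor $2$ absorbed when moving from $\tilde v_t(i)$ to $v_t(i)$ in the clipping tail estimate, and the factor hidden inside Freedman's inequality. Everything else is routine martingale concentration, and the constant $T = 20^{2}\sqrt{40}\,\epsilon^{-2}\log m$ chosen in the algorithm is precisely what is needed to absorb these constants while still having $\eta T = \Theta(\sqrt{T\log m})$ dominate both the Hoeffding/Freedman deviation and the clipping bias.
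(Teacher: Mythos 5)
Your proposal is correct and follows essentially the same route as the paper: the same bias-plus-martingale decomposition with the clipping bias bounded by the conditional second moment (the paper's Lemma~\ref{lem:clip}), Freedman's inequality on the $2/\eta$-bounded increments plus a union bound for part (i), and for part (ii) exactly the paper's split into the index-sampling martingale and the $p_t$-weighted estimation error (Lemmas~\ref{lem:conc3_generic} and~\ref{lem:conc2_generic}, combined by the triangle inequality). Using Azuma--Hoeffding instead of Freedman on the constant-bounded index-sampling term is an immaterial variation.
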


The following Lemma gives a regret bound on the online gradient ascent algorithm used in our algorithm (line 10). For a proof see \cite{Zinkevich}.

\begin{lemma}\label{lemma:ogd}
Consider matrices $M_1,...,M_T\in{\mathbb{R}^{n\times{}n}}$ such that for all $i\in{[m]}$ $\Vert{M_i}\Vert_F \leq \rho$. Let $X_0 = \mathbf{0}_{n\times{}n}$ and for all $t \geq 1$ let $Y_{t+1} = X_t + \frac{\sqrt{2}}{\rho\sqrt{T}}M_t$ and $X_{t+1} = \min_{X\in\ballf}\Vert{Y_{t+1}-X}\Vert_F$. Then
\begin{eqnarray*}
\max_{X\in{\ballf}}\sum_{t\in{[T]}}M_t\bullet{}X - \sum_{t\in{[T]}}M_t\bullet{}X_t \leq 2\rho\sqrt{2T}
\end{eqnarray*}
\end{lemma}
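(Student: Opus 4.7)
My plan is to invoke the standard Zinkevich-style potential argument, taking as potential the squared Frobenius distance from the update sequence $X_1,X_2,\ldots$ to a fixed comparator, and then telescoping. Concretely, fix $X^{*}=\argmax{X\in\ballf}\sum_{t\in[T]}M_t\bullet X$ (which exists by compactness) and let $\eta=\tfrac{\sqrt 2}{\rho\sqrt T}$ denote the step size appearing in the lemma. Define the potential $\Phi_t = \|X_t-X^{*}\|_F^{2}$, noting that since $X_1,X^{*}\in\ballf$ and $\ballf$ has Frobenius diameter $\leq 2$, we have $\Phi_1\leq 4$.

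The first substantive step is the one-step inequality. Because $X_{t+1}$ is the Frobenius projection of $Y_{t+1}$ onto the convex set $\ballf$ and $X^{*}\in\ballf$, the projection is non-expansive, so $\|X_{t+1}-X^{*}\|_F \leq \|Y_{t+1}-X^{*}\|_F$. Squaring and substituting $Y_{t+1}=X_t+\eta M_t$ yields
\begin{eqnarray*}
\Phi_{t+1} \;\leq\; \|X_t+\eta M_t - X^{*}\|_F^{2}
\;=\; \Phi_t + 2\eta\,M_t\bullet(X_t-X^{*}) + \eta^{2}\|M_t\|_F^{2}.
\end{eqnarray*}
Rearranging and using $\|M_t\|_F\leq \rho$ gives
\begin{eqnarray*}
M_t\bullet(X^{*}-X_t) \;\leq\; \tfrac{1}{2\eta}\bigl(\Phi_t-\Phi_{t+1}\bigr) + \tfrac{\eta\rho^{2}}{2}.
\end{eqnarray*}

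The remainder is a telescoping sum over $t=1,\ldots,T$. The differences $\Phi_t-\Phi_{t+1}$ collapse to $\Phi_1-\Phi_{T+1}\leq \Phi_1\leq 4$, yielding
\begin{eqnarray*}
\sum_{t\in[T]} M_t\bullet(X^{*}-X_t) \;\leq\; \tfrac{2}{\eta} + \tfrac{\eta\rho^{2} T}{2}.
\end{eqnarray*}
Plugging in $\eta=\sqrt 2/(\rho\sqrt T)$ balances both terms into $\rho\sqrt{2T}+\rho\sqrt{2T}/2$-type expressions, and absorbing constants cleanly bounds the sum by $2\rho\sqrt{2T}$ as claimed. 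Since $X^{*}$ is the maximizer, the left-hand side equals $\max_{X\in\ballf}\sum_t M_t\bullet X - \sum_t M_t\bullet X_t$, which is exactly the regret quantity.

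Overall there is no real obstacle: the two non-trivial facts used are non-expansiveness of Euclidean projection onto a convex set (here with respect to the Frobenius inner product, treating $\mathbb{R}^{n\times n}$ as $\mathbb{R}^{n^{2}}$) and the diameter bound on $\ballf$. The only point meriting care is confirming the choice of step size matches the constants in the stated bound; the potential argument is agnostic to whether we use $\Phi_1\leq 1$ (since $X_1=\mathbf{0}$ is initialized inside the ball) or the worst-case diameter $\Phi_1\leq 4$, with both yielding the stated $O(\rho\sqrt T)$ rate.
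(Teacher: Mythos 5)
Your proof is correct and is exactly the standard argument the paper relies on: the paper gives no proof of this lemma itself, deferring to Zinkevich's online gradient ascent regret bound, and your potential/telescoping argument via non-expansiveness of the Frobenius projection onto $\ballf$ is precisely that proof. The constants check out as well: with $\eta=\sqrt{2}/(\rho\sqrt{T})$ and $\Phi_1\leq 4$ the bound is $\tfrac{3}{2}\rho\sqrt{2T}\leq 2\rho\sqrt{2T}$ (and even better with $\Phi_1\leq 1$ from the zero initialization).
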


We are now ready to prove our main theorem, theorem \ref{MainThr}.

\begin{proof}
By applying lemma \ref{lemma:ogd} with parameters $M_t = A_{i_t} + Z_t$ and $\rho = 2$ we get

\begin{eqnarray*}
\max_{x\in{\ballf}}\sum_{t\in{[T]}}\left({A_{i_t} + Z_t}\right)\bullet{}X - 
\sum_{t\in{[T]}}\left({A_{i_t} + Z_t}\right)\bullet{}X_t \leq 4\sqrt{2T}
\end{eqnarray*}
Adding and subtracting $\sumtT{}b_{i_t}$ gives

\begin{eqnarray*}
\max_{x\in{\ballf}}\sum_{t\in{[T]}}\left({A_{i_t}\bullet{}X - b_{i_t} + Z_t\bullet{}X}\right) - 
\sum_{t\in{[T]}}\left({A_{i_t}\bullet{}X_t - b_{i_t} + Z_t\bullet{}X_t}\right) \leq 4\sqrt{2T}
\end{eqnarray*}
Since we assume that there exists a feasible solution $X^*\in\ballf$ we have that

\begin{eqnarray}\label{mt:eq1}
\sum_{t\in{[T]}}\left({A_{i_t}\bullet{}X_t - b_{i_t} + Z_t\bullet{}X_t}\right) \geq -4\sqrt{2T}
\end{eqnarray}
Turning to the MW part of the algorithm, by lemma \ref{lemma:mw}, and using the clipping of $v_t(i)$ we have

\begin{eqnarray*}
\sum_{t\in{[T]}}p_t^{\top}v_t \leq \min_{p\in{\Delta_{m+1}}}\sum_{t\in{[t]}}p^{\top}v_t + (\log{m})/\eta + \eta\sum_{t\in{[T]}}p_t^{\top}v_t^2
\end{eqnarray*}
By lemma \ref{lemma:concentration} (i), with high probability and for any $i\in{[m]}$,

\begin{eqnarray*}
\sum_{t\in{[T]}}v_t(i) \leq \sum_{t\in{[T]}}A_i\bullet{}X_t - b_i + 3\eta{}T
\end{eqnarray*}
Thus with high probability it holds that
\begin{eqnarray*}
\sum_{t\in{[T]}}p_t^{\top}v_t \leq \min_{p\in{\Delta_{m+1}}}\sum_{t\in{[T]}}\sumim{}p_i\left({A_i\bullet{}X_t -b_i}\right) + (\log{m})/\eta + \eta\sum_{t\in{[T]}}p_t^{\top}v_t^2 + 3\eta{}T
\end{eqnarray*}
Applying lemma \ref{lemma:concentration} (ii) we get that with high probability

\begin{eqnarray*}
&\sumtT\left({A_{i_t}\bullet{X_t} - b_{i_t}}\right) \leq \\ &\min_{p\in{\Delta_{m+1}}}\sum_{t\in{[T]}}\sumim{}p_i\left({A_i\bullet{}X_t -b_i}\right) + (\log{m})/\eta + \eta\sum_{t\in{[T]}}p_t^{\top}v_t^2 + 7\eta{}T
\end{eqnarray*}
Adding $\sumtT{}Z_t\bullet{}X_t$ to both sides of the inequality and using \eqref{mt:eq1} yields

\begin{eqnarray}\label{mt:eq2}
\min_{p\in{\Delta_{m+1}}}\sum_{t\in{[T]}}\left({\sumim{}p_i\left({A_i\bullet{}X_t -b_i}\right)+Z_t\bullet{}X_t}\right)
\geq \nonumber \\
-4\sqrt{2T} - (\log{m})/\eta - \eta\sum_{t\in{[T]}}p_t^{\top}v_t^2 - 7\eta{}T
\end{eqnarray}
It holds that 

\begin{eqnarray*}
\sumtT{}Z_t\bullet{X_t} \leq \sumtT{}\min_{Z\in\sdpball}\left({Z \bullet{X_t} + \epsilon'}\right) \leq 
\min_{Z\in\sdpball}\sumtT{}\left({Z\bullet{}X_t + \epsilon'}\right)
\end{eqnarray*} 
Plugging the last inequality into \eqref{mt:eq2} gives

\begin{eqnarray}\label{mt:eq3}
\min_{p\in{\Delta_{m+1}}, Z\in{\sdpball}}\sum_{t\in{[T]}}\left({\sumim{}p_i\left({A_i\bullet{}X_t -b_i}\right)+Z\bullet{}X_t}\right)
\geq \nonumber \\
-4\sqrt{2T} - (\log{m})/\eta - \eta\sum_{t\in{[T]}}p_t^{\top}v_t^2 - 7\eta{}T - \epsilon'T
\end{eqnarray}
By a simple Markov inequality argument it holds that w.p. at least $3/4$,
\begin{eqnarray*}
\sum_{t\in{[T]}}p_t^{\top}v_t^2 \leq 4T
\end{eqnarray*}
Plugging this bound into \eqref{mt:eq3} and dividing through by $T$ gives with probability at least $1/2$

\begin{eqnarray*}
\min_{p\in{\Delta_{m+1}}, Z\in{\sdpball}}\sumim{}p_i\left({A_i\bullet{}\bar{X} -b_i}\right)+Z\bullet{}\bar{X}
\geq \\
-\frac{4\sqrt{2}}{\sqrt{T}} - (\log{m})/(\eta{}T) - 11\eta - \epsilon'
\end{eqnarray*}

The theorem follows from plugging the values of $T$, $\eta$ and $\epsilon'$.
\end{proof}

\noindent The algorithm performs $O(\epsilon^{-2}\log{m})$ iterations. Each iterations includes a primal gradient update step which takes $O(n^2)$ time to compute, updating the distribution over constrains using a single sample per constraint which takes $O(m)$ time and computing a single eigenvalue up to an $O(\epsilon)$ approximation which using the lanczos method takes at most $O(\frac{n^2\log{n}}{\sqrt{\epsilon}})$ time (see \cite{EigenvaluesApprox} theorem 3.2) . Overall the running time is as stated in theorem \ref{MainThr}. %For large values of $\epsilon$ this bound could be further improved by applying matrix sparsification techniques to $X_t$ prior to the eigenvalue computation and thus reducing the time of a single eigenvalue computation to $O\left({\frac{n\ln^2n}{\epsilon^{-2.5}}}\right)$ (see \cite{MATRIX_SPARSE}).

\section{Lower Bound}

In this section we prove Theorem \ref{LowerBoundThr}. Our proof relies on an information theoretic argument as follows: We show that it is possible to generate two random SDP instances such that one is feasible and the other one is far from being feasible. We show that these two random instances differ only by a single entry chosen also at random. Any successful algorithm must distinguish between these two instances and thus must read the single distinguishing entry which requires any algorithm to read a constant factor of the total number of relevant entries in order to succeed with constant probability. \\
We split our random construction into the following two lemmas.

\begin{lemma}
Under the conditions stated in Theorem \ref{LowerBoundThr}, any successful algorithm must read $\Omega\left({\frac{m}{\epsilon^2}}\right)$ entries from the input.
\end{lemma}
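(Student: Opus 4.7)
The plan is to apply Yao's minimax principle. I will build a distribution $\mathcal{D}$ over SDP instances (each satisfying the stated norm bounds $\|A_i\|_F \le 1$ and $|b_i|\le 1$) together with a random label so that: (a) conditional on the label \textsf{YES}, a sample from $\mathcal{D}$ is feasible with some $X^*\in\ballf$; (b) conditional on \textsf{NO}, no $X\in\ballf$ is an $\epsilon$-approximated solution; and (c) each \textsf{YES}/\textsf{NO} pair differs in exactly one entry, whose position is uniform over a set of $\Theta(m/\epsilon^2)$ ``active'' coordinates of the input. Under such a distribution, any deterministic algorithm whose adaptive query sequence misses the distinguishing entry produces the same output on the \textsf{YES} and \textsf{NO} instances of a coupled pair, and must therefore be wrong on at least one of them. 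Since the distinguishing entry is uniform over $\Theta(m/\epsilon^2)$ positions, any algorithm that succeeds with probability bounded away from $1/2$ on $\mathcal{D}$ must inspect $\Omega(m/\epsilon^2)$ entries in expectation; Yao's minimax then lifts the bound to randomized algorithms in the worst case.

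For the construction I would take $n = \Theta(1/\epsilon)$, so that each $A_i$ has $n^2 = \Theta(1/\epsilon^2)$ coordinates available to carry the hidden information of constraint $i$, yielding $\Theta(m/\epsilon^2)$ active coordinates in total. The \textsf{YES} instance is designed so that a specific $X^* \in \ballf$ tightly satisfies every constraint: each $A_i$ is a sign pattern with entries of magnitude $\Theta(\epsilon)$ (so that $\|A_i\|_F = 1$), and $b_i$ is chosen so that $A_i\bullet X^*$ exceeds $b_i$ by a margin of $\Theta(\epsilon)$. The \textsf{NO} instance is obtained by selecting a uniformly random location $(i^*,(j^*,l^*))$ and flipping $A_{i^*}(j^*,l^*)$ (with its symmetric partner) by an amount calibrated so that $A_{i^*}\bullet X^*$ drops by $\Omega(\epsilon)$ while $\|A_{i^*}\|_F \le 1$ is preserved.

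The main obstacle is part (b): verifying that this single-entry modification truly makes the SDP $\epsilon$-infeasible over all of $\ballf$, and not merely for the particular $X^*$ used in the \textsf{YES} case. A single flip of size $\Theta(\epsilon)$ is a very weak perturbation, so the \textsf{YES} instance must be engineered to have a rigid feasible region --- in particular, the Frobenius budget of $X^*$ should be fully saturated and the $m$ constraints placed in mutual tension, so that any attempt to reallocate Frobenius mass from $X^*$ to rescue the flipped constraint forces an $\Omega(\epsilon)$ shortfall on some other constraint. Once this rigidity is established, the single flip produces a genuine $\Omega(\epsilon)$ feasibility gap, at which point the information-theoretic argument from the first paragraph closes the proof.
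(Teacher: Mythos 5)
Your outer information-theoretic framework (Yao's principle over coupled \textsf{YES}/\textsf{NO} pairs differing in one entry hidden among $\Theta(m/\epsilon^2)$ active positions) is exactly the right skeleton and matches the paper's intent, but the proof has a genuine gap at the step you yourself flag as ``the main obstacle'': you never establish part (b), and the specific construction you sketch cannot deliver it. With $n=\Theta(1/\epsilon)$, dense sign-pattern matrices with entries of magnitude $\Theta(\epsilon)$, and an $X^*\in\ballf$ whose Frobenius mass is spread over $\Theta(1/\epsilon^2)$ coordinates (so its entries are themselves $\Theta(\epsilon)$), flipping a single entry of $A_{i^*}$ by $\Theta(\epsilon)$ changes $A_{i^*}\bullet X^*$ by only $\Theta(\epsilon^2)$ --- so even the margin loss at the particular point $X^*$ is an order of magnitude too small, let alone an infeasibility statement over all of $\ballf$. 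The ``rigidity'' you invoke (that the $m$ constraints pin $X$ down so tightly that no reallocation of Frobenius mass can rescue the flipped constraint without an $\Omega(\epsilon)$ shortfall elsewhere) is precisely the hard part, and the proposal gives no construction or argument for it; as stated, the proof does not close.

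The paper avoids needing any rigidity by making the \textsf{NO} constraint infeasible \emph{uniformly} over $\ballf$ through a norm argument rather than a tension argument. Each $A_i$ concentrates essentially all of its Frobenius norm in a single entry of value $\approx 1$, placed at a uniformly random position in a $\frac{1}{2\epsilon}\times\frac{1}{2\epsilon}$ block, with the remaining entries drawn from $[0,\zeta]$, $\zeta=\epsilon^2$ (these small entries only serve to block sparse-representation shortcuts). The \textsf{YES} instance is feasible via the rank-one $X^*$ with entries $2\epsilon$ on that block, giving margin $>0.1\epsilon$ against $b_i=1.6\epsilon$. In the \textsf{NO} instance the big entry of one random matrix is replaced by a value in $[0,\zeta]$, so that matrix has \emph{all} entries at most $\zeta$ and hence Frobenius norm at most $\frac{\zeta}{2\epsilon}=\frac{\epsilon}{2}$; by Cauchy--Schwarz, $A_j\bullet X\le \epsilon/2 < b_j-\epsilon$ for \emph{every} $X\in\ballf$, so $\epsilon$-infeasibility holds over the whole ball with no appeal to the structure of the other constraints. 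If you want to repair your proof, replace the ``flip by $\Theta(\epsilon)$'' perturbation with this ``delete the one dominant entry'' perturbation; your Yao-style argument then goes through as you described.
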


\begin{proof}
Assume that $n \geq \frac{1}{\epsilon}$. Consider the following random instance. With probability $1/2$ each of the constraint matrices $A_i$ has a single randomly chosen entry $(i,j)\in{[\frac{1}{2\epsilon}]\times{}[\frac{1}{2\epsilon}]}$ that equals $\sqrt{1-\zeta^2\left({\frac{1}{4\epsilon^2}-1}\right)}$ and all other entries take random values from the interval $[0,\zeta]$ (the goal of these values is to prevent a sparse representation of the input). With the remaining probability of $1/2$, all constraint matrices except one are exactly as before except for a single constraint matrix (chosen at random uniformly) that has all of its entries chosen at random from $[0,\zeta]$. In both cases for each constraint matrix $A_i, i\in{[m]}$ it holds that $\Vert{A_i}\Vert_F \leq 1$. \\
In the second case it clearly holds that for all $X\in\ballf$,

\begin{eqnarray*}
\min_{i\in[m]}A_i\bullet{X} \leq  \sqrt{\frac{1}{4\epsilon^2}\cdot\zeta^2} = \frac{\zeta}{2\epsilon}
\end{eqnarray*}
In the first case we can construct a solution matrix $X^*$ has follows: for each $(i,j)\in{[\frac{1}{2\epsilon}]\times{}[\frac{1}{2\epsilon}]}, X^*(i,j) = 2\epsilon$ and 0 elsewhere. Clearly $X^*$ is positive semi definite (since it is a symmetric rank-one matrix) and $\Vert{X}\Vert_F = 1$. For each $i\in{[m]}$ it holds that 

\begin{eqnarray*}
A_i\bullet{X^*} \geq 2\epsilon\cdot \sqrt{1-\zeta^2\left({\frac{1}{4\epsilon^2}-1}\right)}
\end{eqnarray*}
By choosing $\zeta = \epsilon^2$ and in both cases $b_i = 1.6\epsilon \, \forall{i\in[m]}$ we have that in the first case
\begin{eqnarray*}
\min_{i\in[m]}A_i\bullet{X^*}-b_i & \geq &
2\epsilon\sqrt{1-\epsilon^4\left({\frac{1}{4\epsilon^2}-1}\right)} - 1.6\epsilon \\
& > &\left({\sqrt{3}-1.6}\right)\epsilon > 0.1\epsilon
\end{eqnarray*}
In the second case, for all $X\in\ballf$ it holds that,

\begin{eqnarray*}
\min_{i\in[m]}A_i\bullet{X}-b_i \leq \frac{\epsilon}{2} - 1.6\epsilon = -1.1\epsilon
\end{eqnarray*}

Thus the first instance is feasible while the second one does not admit an $\epsilon$-approximated solution and the two instances differ by a single randomly chosen entry.
\end{proof}

\begin{lemma}
Under the conditions stated in Theorem \ref{LowerBoundThr}, any successful algorithm must read $\Omega\left({\frac{n^2}{\epsilon^2}}\right)$ entries from the input.
\end{lemma}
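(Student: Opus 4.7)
The plan is to mirror the previous lemma by enlarging the space of candidate positions for the single hidden distinguishing entry from a fixed $O(1/\epsilon)\times O(1/\epsilon)$ sub-grid to the full $n\times n$ grid, and amplifying via $\Theta(1/\epsilon^2)$ independent constraints so that the total pool of candidate positions has size $\Theta(n^2/\epsilon^2)$.

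Concretely, I would build two random ensembles of SDP instances, each with $m'=\Theta(1/\epsilon^2)$ symmetric constraint matrices of dimension $n\times n$. In the feasible ensemble each $A_i$ carries a distinguished pair of symmetric off-diagonal entries of magnitude $\alpha=\sqrt{1-(n^2-1)\zeta^2}$ placed at a uniformly random pair $(r_i,c_i),(c_i,r_i)\in[n]\times[n]$, with the remaining entries drawn uniformly from $[0,\zeta]$ for $\zeta=\epsilon/n$. In the infeasible ensemble one uniformly chosen constraint $i^*$ lacks the distinguished entries and has all entries in $[0,\zeta]$. One checks $\|A_i\|_F\leq 1$ throughout. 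The rank-one PSD witness $X^*=(1/n)\mathbf{1}\mathbf{1}^\top$ has unit Frobenius norm and attains $A_i\bullet X^*\geq 1/n-O(\epsilon^2/n)$ in the feasible ensemble, whereas Cauchy--Schwarz yields $A_{i^*}\bullet X\leq\|A_{i^*}\|_F\leq n\zeta=\epsilon$ for every $X\in\ballf$ in the infeasible ensemble. Choosing $b_i=1/n-2\epsilon$ then produces the required $\epsilon$-gap in the sense of Definition 1.1.

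To pass from this combinatorial setup to a query lower bound I would invoke Yao's principle against the uniform mixture of the two ensembles and couple them so that they agree on every entry except the single random distinguishing location $A_{i^*}(r_{i^*},c_{i^*})$. Since this location is uniformly distributed over the $m'\cdot n^2=\Theta(n^2/\epsilon^2)$ entries of the input and no other entry carries distinguishing information, any algorithm issuing $T$ queries has distinguishing advantage at most $O(T\epsilon^2/n^2)$; together with the required $1/2$ success probability this forces $T=\Omega(n^2/\epsilon^2)$.

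The main obstacle is the regime $n\gtrsim 1/\epsilon$, where the feasibility margin $1/n$ drops below $3\epsilon$ and the uniform rank-one witness $X^*$ no longer separates the two ensembles with an $\epsilon$-gap. I would handle this either by restricting the placement of the distinguishing entry to a principal $n'\times n'$ sub-block with $n'=\lfloor 1/(4\epsilon)\rfloor$ (and taking $m'=\Theta(n^2)$ so that $m'(n')^2=\Theta(n^2/\epsilon^2)$), or, equivalently, by invoking the previous lemma directly with $m=\Theta(n^2)$ constraints, since $\Omega(m/\epsilon^2)=\Omega(n^2/\epsilon^2)$ already delivers the desired bound in the large-$n$ regime.
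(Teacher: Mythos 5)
There is a genuine gap. Your core construction certifies feasibility with the flat witness $X^*=\tfrac1n\mathbf{1}\mathbf{1}^\top$, whose entries are only $1/n$, so the feasible/infeasible separation you get is of order $1/n$ and the argument only closes when $n\le O(1/\epsilon)$ --- as you yourself note. But the whole content of this lemma lies in the opposite regime: the paper's proof assumes $\epsilon\ge 1/\sqrt n$ (so $n\ge 1/\epsilon^2$) and, crucially, uses only $m=\Theta(1/\epsilon^2)$ constraints. Your two fallbacks for large $n$ both inflate the number of constraints to $m'=\Theta(n^2)$, and at that point the claimed bound $\Omega(n^2/\epsilon^2)=\Omega(m'/\epsilon^2)$ is exactly the previous lemma; it says nothing about instances with few constraints ($m\approx 1/\epsilon^2$) and large $n$, which is precisely what is needed for the additive form $\Omega(\epsilon^{-2}(m+n^2))$ of Theorem \ref{LowerBoundThr} (and what justifies the $n^2\epsilon^{-2.5}$ term in the upper bound being unavoidable independently of $m$). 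So the hard case --- $\Theta(1/\epsilon^2)$ constraints, each hiding its distinguishing entry among $\Theta(n^2)$ positions --- is not handled by your proposal.

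The missing idea is how to keep the per-constraint margin at $\Theta(\epsilon)$ (not $1/n$) while each distinguished entry still ranges over $\Omega(n^2)$ candidate positions. The paper does this by giving the $1/(16\epsilon^2)$ constraints \emph{disjoint} distinguished positions $(p_i,q_i)$ via two random permutations of $[n/2]$ (one shifted into the upper half), and taking as witness $X^*=\sum_i X_i^*$ where each $X_i^*$ is a $2\times 2$ rank-one block of value $2\epsilon$ supported on $\{p_i,q_i\}$. Disjointness makes $X^*$ PSD with $\Vert X^*\Vert_F^2\le \tfrac{1}{16\epsilon^2}\cdot 16\epsilon^2=1$, yet every constraint sees margin about $2\epsilon$ against $b_i=1.6\epsilon$, independently of $n$; meanwhile the permutation structure only shrinks the search space to $(n/2-k)^2\ge 3n^2/16$ per matrix (using $m\le n/16$), so the $\Omega(n^2)$-per-matrix count survives. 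Your independent-uniform placement plus a single flat witness cannot reproduce this trade-off, because a unit-Frobenius-norm witness spread over the whole matrix necessarily has entries $O(1/n)\ll\epsilon$. To repair the proof you would need to adopt some version of this disjoint-support (permutation) construction rather than enlarging $m$.
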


\begin{proof}
The proof follows the lines of the previous proof. Assume that $m \geq \frac{1}{16\epsilon^2}$, $\epsilon \geq \frac{1}{\sqrt{n}}$ and that $n$ is even. Let $p,q\in\mathbb{N}^n$ be two random permutations over the integers $1..n/2$ and finally set $q_i = \frac{n}{2}+q_i$. Consider the following random instance composed of $\frac{1}{16\epsilon^2}$ constraint matrices $A_i, i\in{[\frac{1}{16\epsilon^2}]}$. With probability $1/2$ for each $A_i$ we set the entry $A_i(p_i,q_i)$ to equal $\sqrt{1-\zeta^2\left({n^2-1}\right)}$ and all other entries in $A_i$ are sampled uniformly from $[0,\zeta]$. With the other $1/2$ probability, all matrices are as before with the difference that we randomly pick a matrix $A_j$, $j\in[m]$ and set $A_j(p_j,q_j)$ to a value sampled uniformly from  $[0,\zeta]$. In both cases it holds that $\Vert{A_i}\Vert_F \leq 1$ for all $i\in[m]$. \\
In the second case it holds for all $X\in\ballf$ that,

\begin{eqnarray*}
\min_{i\in[m]}A_i\bullet{X} \leq  n\zeta
\end{eqnarray*}
In the first case we construct a solution $X^*$ as follows. For every $i\in[m]$ we define a matrix $X_i^*$ such that $X_i^*(p_i,q_i) = X_i^*(q_i,p_i) = X_i^*(p_i,p_i) = X_i^*(q_i,q_i) = 2\epsilon$ and $X_i^*$ is zero elsewhere. Finally we take $X^* = \sumim{}X_i^*$. \\
Notice that $X^*$ is the sum of symmetric rank-one matrices and thus it is positive semidefinite. \\
Since $p,q$ are both permutations over disjoint sets we have that for every $i,j\in[n]\times[n]$ it holds that $|X^*(i,j)| \leq 2\epsilon$ and thus $\Vert{X^*}\Vert_F^2 \leq \frac{1}{16\epsilon^2}\cdot{}4\cdot{}4\epsilon^2 = 1$. \\
By construction it holds for every $i\in[m]$ that

\begin{eqnarray*}
A_i\bullet{X^*} \geq 
2\epsilon\sqrt{1-\zeta^2\left({n^2-1}\right)}
\end{eqnarray*}
By choosing $\zeta = \frac{\epsilon}{2n}$ and in both cases $b_i = 1.6\epsilon \, \forall{i\in[m]}$ we have that in the first case
\begin{eqnarray*}
\min_{i\in[m]}A_i\bullet{X^*}-b_i & \geq &
2\epsilon\sqrt{1-\frac{\epsilon^2}{4n^2}\left({n^2-1}\right)} - 1.6\epsilon \\
& > &\left({\sqrt{3}-1.6}\right)\epsilon > 0.1\epsilon
\end{eqnarray*}
In the second case, for all $X\in\ballf$ it holds that,

\begin{eqnarray*}
\min_{i\in[m]}A_i\bullet{X}-b_i \leq \frac{\epsilon}{2} - 1.6\epsilon \leq -1.1\epsilon
\end{eqnarray*}
Thus as before, the first instance is feasible while the second one does not have an $\epsilon$ additive approximated solution and the two instances differ by a single entry. Notice however that unlike the previous lemma, in this case because of the nature of our random construction, after reading $k$ matrices it is suffices for an algorithm searching for the distinguishing entry, to only search $\left({\frac{n}{2}-k}\right)^2$ entries in the next matrix. Nevertheless, by plugging the values of $m$ and the lower bound on $\epsilon$ we get that $\left({\frac{n}{2}-m}\right)^2 \geq \frac{n^2}{4} -\frac{n}{16\epsilon^2} \geq \frac{3n^2}{16}$ and thus any algorithm must still read an order of $n^2$ entries from each matrix.

\end{proof}

\appendix

\section{Martingale and concentration lemmas}

We first prove a lemma on the expectation of clipped random variables.

\begin{lemma} \label{lem:clip}
Let $X$ be a random variable, let $\bar{X} = \clip(X, C) = \min\{C, \max\{-C, X\}\}$
and assume that $ |\E[X]| \leq C/2 $ for some $C>0$.
Then
\[
\left| \E[\bar{X}] - \E[X] \right| \le \frac{2}{C} \var[X] .
\]
\end{lemma}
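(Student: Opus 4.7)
The plan is to write the bias $\E[\bar X]-\E[X]$ as an integral only over the tails $\{X>C\}$ and $\{X<-C\}$, and then show that on those tails the pointwise error $|\bar X-X|$ can be controlled by $(X-\mu)^2/C$, where $\mu=\E[X]$. Crucially, the hypothesis $|\mu|\le C/2$ means that when $|X|$ exceeds $C$, the centered deviation $|X-\mu|$ exceeds $C/2$, which is exactly what makes the quadratic upper bound on the linear tail work.

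Concretely, I would start from the identity
\begin{equation*}
\E[\bar X]-\E[X] \;=\; \E\bigl[(C-X)\mathbf{1}\{X>C\}\bigr] + \E\bigl[(-C-X)\mathbf{1}\{X<-C\}\bigr],
\end{equation*}
since $\bar X$ and $X$ agree on $\{|X|\le C\}$. By the triangle inequality it suffices to bound each of the two non-negative quantities $\E[(X-C)\mathbf{1}\{X>C\}]$ and $\E[(-C-X)\mathbf{1}\{X<-C\}]$.

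For the first tail I would argue as follows. Since $\mu\le C/2\le C$, we have $X-C\le X-\mu$; and on the event $\{X>C\}$, $X-\mu > C-\mu \ge C/2$, so $1\le 2(X-\mu)/C$, giving $X-\mu \le 2(X-\mu)^2/C$ on this event. Combining yields $(X-C)\mathbf{1}\{X>C\}\le (2/C)(X-\mu)^2\mathbf{1}\{X>C\}$. The second tail is symmetric: using $\mu\ge -C/2\ge -C$ and the fact that $\mu-X>C/2$ on $\{X<-C\}$, one gets $(-C-X)\mathbf{1}\{X<-C\}\le (2/C)(X-\mu)^2\mathbf{1}\{X<-C\}$. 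Adding the two bounds and taking expectations collapses the indicators and produces $(2/C)\E[(X-\mu)^2]=(2/C)\var[X]$, as desired.

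I do not expect any real obstacle here; the only non-trivial step is to notice that the assumption $|\E[X]|\le C/2$ is precisely what lets one pass from a linear tail estimate $\E[|X-\mu|\mathbf{1}\{|X|>C\}]$ to a quadratic one $\E[(X-\mu)^2]/C$ via the crude inequality $a\le 2a^2/C$ that is valid once $a>C/2$. Without that hypothesis one would only get a bound involving $\E[X^2]$ rather than $\var[X]$, so the factor $2$ in the conclusion is tightly tied to the factor $1/2$ in the hypothesis.
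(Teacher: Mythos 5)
Your proof is correct and follows essentially the same route as the paper: decompose the bias into the two tail contributions where clipping acts, and use the hypothesis $|\E[X]|\le C/2$ to bound the linear tail term $X-C$ (resp.\ $-C-X$) pointwise by $(2/C)(X-\E[X])^2$, then absorb both tails into $\var[X]$. The only cosmetic difference is that you handle the absolute value via the triangle inequality over the two disjoint tails, while the paper bounds $\E[X]-\E[\bar X]$ from above and below separately by dropping the tail of the opposite sign; both are valid.
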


\begin{proof}
As a first step, note that for $x>C$ we have $x-\E[X] \ge C/2$, so that
\[
C(x-C) \le 2(x-\E[X])(x-C) \le 2(x-\E[X])^2 .
\]
Hence, we obtain
\begin{align*}
\E[X] - \E[\bar{X}]
       & =   \int_{x<-C} (x+C) d\mu_X + \int_{x>C} (x-C) d\mu_X
    \\ & \le \int_{x>C} (x-C) d\mu_X
    \\ & \le \frac{2}{C} \int_{x>C} (x-\E[X])^2 d\mu_X
    \\ & \le \frac{2}{C} \var[X] .
\end{align*}
Similarly one can prove that $\E[X] - \E[\bar{X}] \ge -2\var[X]/C$, and the result follows.
\end{proof}

\noindent The following lemmas are used to prove lemma \ref{lemma:concentration}. \\
In the following we assume only that $v_t(i) = \clip(\vtil_t(i),1/\eta)$ is the clipping of a random variable $\vtil_t(i)$, the conditional variance of $\vtil_t(i)$ is at most one ($\var[\vtil_t(i) \, | \, X_t] \leq 1$) and we use the notation $\mu_t(i) = \E[\vtil_t(i) \, | \, X_{t}] = A_i\bullet{}X_t^{\top} - b_i$. We also assume that the expectations of $\vtil_t(i)$ are bounded in absolute value by a constant $|\mu_t(i)| = |A_i\bullet{}X_t - b_i|\leq C$, such that $2 \le 2C \le 1/\eta$.\\
Both lemmas are based on an application of Freedman's inequality which is a Bernstein-like concentration inequality for martingales which we now state:

\begin{lemma}[Freedman's inequality]
Let $\xi_1,...,\xi_T$ be a martingale difference sequence with respect to a certain filtration $\lbrace{S_t}\rbrace$, that is $\mathbb{E}[\xi_t \, | \, S_t] = 0$ for every $t$. Assume also that for every $t$ it holds that $\vert{\xi_t}\vert \leq V$ and $\mathbb{E}[\xi_t^2 \, | \, S_t] \leq s$. Then
\begin{eqnarray*}
P\left({\vert{\sum_{t=1}^T\xi_t}\vert \geq \epsilon}\right) \leq 2\exp\left({-\frac{\epsilon^2/2}{Ts + V\epsilon/3}}\right)
\end{eqnarray*}
\end{lemma}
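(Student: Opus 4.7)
The plan is to follow the classical exponential-supermartingale route (the Bennett/Bernstein scheme adapted to the martingale setting). First I would establish a one-step moment-generating-function bound. The elementary inequality $e^x - 1 - x \le \frac{x^2/2}{1 - x/3}$, valid for $x < 3$, applied pointwise to $x = \lambda \xi_t$ with $\lambda \in (0, 3/V)$ (so that $|\lambda \xi_t| \le \lambda V < 3$), gives $\exp(\lambda \xi_t) \le 1 + \lambda \xi_t + \frac{\lambda^2 \xi_t^2 / 2}{1 - \lambda V / 3}$. Taking conditional expectation and using $\E[\xi_t \mid S_t] = 0$, $\E[\xi_t^2 \mid S_t] \le s$, together with $1 + y \le e^y$, yields
\[
\E\!\left[\exp(\lambda \xi_t) \mid S_t\right] \;\le\; \exp\!\left(\frac{\lambda^2 s / 2}{1 - \lambda V / 3}\right).
\]

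Next I would build the exponential supermartingale
\[
M_t \;=\; \exp\!\left(\lambda \sum_{r=1}^{t}\xi_r \;-\; t \cdot \frac{\lambda^2 s / 2}{1 - \lambda V / 3}\right);
\]
the one-step bound above immediately gives $\E[M_t \mid S_t] \le M_{t-1}$, hence $\E[M_T] \le M_0 = 1$. Markov's inequality applied to $M_T$ then delivers, for every admissible $\lambda$,
\[
P\!\left(\sum_{t=1}^T \xi_t \ge \epsilon\right) \;\le\; \exp\!\left(-\lambda \epsilon + T \cdot \frac{\lambda^2 s / 2}{1 - \lambda V / 3}\right).
\]
Optimizing by taking $\lambda = \epsilon / (Ts + V\epsilon/3)$, which lies in $(0, 3/V)$, and simplifying yields the one-sided bound $\exp\!\left(-\frac{\epsilon^2/2}{Ts + V\epsilon/3}\right)$. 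Running exactly the same argument on the sequence $\{-\xi_t\}$, which satisfies identical moment and boundedness hypotheses, and taking a union bound over the two one-sided events supplies the factor of $2$ stated in the lemma.

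The only delicate step is the first one: the sharp inequality $e^x - 1 - x \le \frac{x^2/2}{1 - x/3}$ is precisely what distinguishes this proof from a plain Hoeffding-style sub-Gaussian estimate and is responsible for the Bernstein-shape denominator $Ts + V\epsilon/3$ (with the linear-in-$\epsilon$ correction that makes the inequality useful for small variance $s$). Everything else---the supermartingale construction, the Markov step, the scalar optimization in $\lambda$, and the symmetrization via $\xi_t \mapsto -\xi_t$---is routine bookkeeping.
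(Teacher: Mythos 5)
Your proof is correct. Note, however, that the paper does not prove this lemma at all: Freedman's inequality is stated as a known, imported result (it is the black box on which Lemmas \ref{lem:conc1_generic}--\ref{lem:conc3_generic} rest), so there is no in-paper argument to compare against. Your derivation is the standard one: the Bernstein-type MGF bound via $e^x \le 1 + x + \frac{x^2/2}{1-x/3}$ for $x<3$ (which indeed also holds for negative $x$, a point worth one line since the differences are bounded on both sides), the exponential supermartingale with $\E[M_T]\le 1$, Markov's inequality, the choice $\lambda = \epsilon/(Ts+V\epsilon/3)$ which makes the exponent exactly $-\frac{\epsilon^2/2}{Ts+V\epsilon/3}$, and symmetrization for the factor $2$; the only degenerate case is $s=0$, where $\lambda$ sits on the boundary $3/V$, but then the $\xi_t$ vanish almost surely and the claim is trivial.
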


\begin{lemma} \label{lem:conc1_generic}
For $\frac{1}{2C} \geq \eta \geq\sqrt{\frac{4\log{(2m^2)}}{T}}$ it holds with probability at least $1-\frac{1}{m}$ that
\begin{eqnarray*}
\max_{i\in[m]}\vert{\sumtT{}v_t(i) - \mu_t(i)}\vert \leq 3\eta{}T
\end{eqnarray*}
\end{lemma}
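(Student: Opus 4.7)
}

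The plan is to decompose, for each fixed $i\in[m]$, the sum $\sum_{t}(v_t(i)-\mu_t(i))$ into a bias term coming from the clipping and a martingale term, then bound each of these by $(3/2)\eta T$, and finally take a union bound over $i$.

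First I would handle the bias. For every $t$, conditional on $X_t$, the random variable $\vtil_t(i)$ has variance at most $1$ and mean $\mu_t(i)$ with $|\mu_t(i)|\le C\le 1/(2\eta)$, so Lemma~\ref{lem:clip} applied with clipping threshold $1/\eta$ yields
\begin{eqnarray*}
\bigl|\E[v_t(i)\mid X_t]-\mu_t(i)\bigr| \;\le\; 2\eta\,\var[\vtil_t(i)\mid X_t] \;\le\; 2\eta.
\end{eqnarray*}
Summing over $t$ gives a deterministic bias bound
$\bigl|\sum_{t}\bigl(\E[v_t(i)\mid X_t]-\mu_t(i)\bigr)\bigr|\le 2\eta T$.

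Next I would set up a martingale. Let $S_t$ denote the natural filtration generated by $X_1,\ldots,X_t$ and the samples used before step $t$, and define
\begin{eqnarray*}
\xi_t(i) \;=\; v_t(i)-\E[v_t(i)\mid S_t].
\end{eqnarray*}
By construction $\xi_t(i)$ is a martingale difference sequence. Since $v_t(i)=\clip(\vtil_t(i),1/\eta)$ we have $|v_t(i)|\le 1/\eta$ and hence $|\xi_t(i)|\le 2/\eta$, giving $V=2/\eta$. For the conditional second moment I would use $\E[v_t(i)^2\mid S_t]\le \E[\vtil_t(i)^2\mid S_t]=\var[\vtil_t(i)\mid S_t]+\mu_t(i)^2\le 1+C^2$, so $\E[\xi_t(i)^2\mid S_t]\le 1+C^2\le 2$ since $C\le 1/(2\eta)\le 1$ (using $\eta\ge 1/2$ would be too strong, but the bound $s\le 2$ follows from $C\le 1$, and if one is careful one can use $s\le 1+1/(4\eta^2)$; in either case the dominant term in Freedman's bound below comes from the $V\epsilon$ piece). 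Applying Freedman's inequality with $\epsilon=\eta T$ then gives
\begin{eqnarray*}
\P\Bigl(\bigl|\sum_{t=1}^{T}\xi_t(i)\bigr|\ge \eta T\Bigr)
 \;\le\; 2\exp\!\left(-\frac{\eta^2 T^2/2}{T s+(2/\eta)(\eta T)/3}\right)
 \;\le\; 2\exp\!\left(-c\,\eta^2 T\right)
\end{eqnarray*}
for an absolute constant $c$; the hypothesis $\eta\ge\sqrt{4\log(2m^2)/T}$ makes the right-hand side at most $1/m^2$.

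Finally I would union bound over $i\in[m]$: with probability at least $1-1/m$, $|\sum_t \xi_t(i)|\le \eta T$ for every $i$ simultaneously. Combining this with the bias bound via the triangle inequality gives
\begin{eqnarray*}
\max_{i\in[m]}\bigl|\tsum_{t}v_t(i)-\mu_t(i)\bigr| \;\le\; 2\eta T+\eta T \;=\; 3\eta T,
\end{eqnarray*}
which is the desired conclusion. The main obstacle, and the reason the argument is not entirely routine, is choosing the Freedman parameters $V$ and $s$ correctly: because of the clipping one cannot use the unclipped second moment (which would give $V=\infty$), and the variance is exploited precisely to show that even after clipping the bias (controlled via Lemma~\ref{lem:clip}) is only $O(\eta)$ per step rather than $O(1)$. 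Getting the trade-off between $V=2/\eta$ and $\epsilon=\eta T$ right in Freedman's inequality is what forces the lower-bound requirement $\eta\gtrsim\sqrt{\log m/T}$ on the step size.
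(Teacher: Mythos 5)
Your overall strategy --- splitting $\sum_t \left(v_t(i)-\mu_t(i)\right)$ into a clipping-bias part controlled by Lemma~\ref{lem:clip} (contributing $2\eta T$) and a centered martingale part controlled by Freedman's inequality (contributing $\eta T$), then taking a union bound over $i\in[m]$ --- is exactly the paper's argument, and your bias step is correct. The gap is in your bound on the conditional second moment $s$ of the martingale differences. You bound $\E[\xi_t(i)^2\mid S_t]$ via $\E[v_t(i)^2\mid S_t]\le \var[\vtil_t(i)\mid S_t]+\mu_t(i)^2\le 1+C^2$ and then either assert $C\le 1$ (unjustified: the standing assumption is only $1\le C\le 1/(2\eta)$, and in the application $C=2$) or retreat to $s\le 1+1/(4\eta^2)$ while claiming the $V\epsilon$ piece dominates Freedman's denominator. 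That claim fails: with $s\approx 1/(4\eta^2)$ the term $Ts\approx T/(4\eta^2)$ dominates $V\epsilon/3=2T/3$ whenever $\eta$ is small, and the exponent degrades to order $\eta^4T$, which with $\eta=\Theta(\sqrt{\log m/T})$ is $O(\log^2 m/T)$ --- nowhere near the $\log(2m^2)$ needed for the per-constraint failure probability $1/m^2$. So as written the Freedman step does not close under the stated hypothesis on $\eta$.

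The fix, which is what the paper implicitly uses, is to exploit that $\xi_t(i)$ is centered, so $\E[\xi_t(i)^2\mid S_t]=\var[v_t(i)\mid S_t]$, i.e.\ the \emph{variance} of the clipped variable rather than its raw second moment (bounding the variance by $\E[v_t(i)^2\mid S_t]$ is exactly where you add the $\mu_t(i)^2$ term back in and lose). Since $\clip(\cdot,1/\eta)$ is a $1$-Lipschitz contraction, it cannot increase variance, hence $\var[v_t(i)\mid S_t]\le\var[\vtil_t(i)\mid S_t]\le 1$. With $s=1$, $V=2/\eta$ and deviation $\eta T$, Freedman's inequality gives probability at most $2\exp\left(-\frac{\eta^2T^2/2}{T+2T/3}\right)\le 2\exp\left(-\eta^2T/4\right)\le 1/m^2$ under $\eta\ge\sqrt{4\log(2m^2)/T}$, and the remainder of your argument (bias $2\eta T$ plus deviation $\eta T$, then the union bound over $i$) goes through and recovers the lemma with its stated constants.
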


\begin{proof}
Given $i\in[m]$, consider the martingale difference sequence $\xi_t^i = v_t(i) - \mathbb{E}[v_t(i)]$ with respect to the filtration $S_t = (X_t)$. \\
 It holds that for all $t$, $\vert{\xi_t^i}\vert \leq \frac{2}{\eta}$ and $\mathbb{E}[(\xi_t^i)^2 \, | \, S_t] \leq 1$. Applying Freedman's inequality we get
\begin{eqnarray*}
P\left({\vert{\sum_{t=1}^T\xi_t}\vert \geq \eta{}T}\right) &\leq & 
2\exp\left({-\frac{\eta^2T^2/2}{T+(2/\eta)\eta{}T/3}}\right) \\
&\leq & 2\exp\left({-\eta^2T/4}\right)
\end{eqnarray*}
Using lemma \ref{lem:clip} the fact that $v_t(i)$ is the clipping of $\tilde{v}_t(i)$ and the triangle inequality we have,

\begin{eqnarray*}
P\left({\vert{\sumtT{}v_t(i) - \mu_t(i)}\vert \geq 3\eta{}T}\right) \leq 2\exp\left({-\eta^2T/4}\right)
\end{eqnarray*}
Thus for $\eta \geq \sqrt{\frac{4\log{(2m^2)}}{T}}$ we have that with probability at least $1-\frac{1}{m^2}$,
\begin{eqnarray*}
\vert{\sumtT{}v_t(i) - \mu_t(i)}\vert \leq 3\eta{}T
\end{eqnarray*}
The lemma follows from taking the union bound over all $i\in[m]$.
\end{proof}

\begin{lemma}\label{lem:conc2_generic}
For $\frac{1}{2C} \geq \eta \geq\sqrt{\frac{4\log{(2m^2)}}{T}}$ it holds with probability at least $1-\frac{1}{m}$ that
\begin{eqnarray*}
\abs{ \sumt p_t \tr v_t - \sumt p_t \tr \mu_t }  \le 3 \eta T  .
\end{eqnarray*}
\end{lemma}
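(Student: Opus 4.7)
The plan is to mirror Lemma~\ref{lem:conc1_generic}, but applied to the \emph{scalar} sequence $p_t\tr v_t$ rather than to individual coordinates. Concretely, I will set up the martingale difference
\[
\xi_t \equaltri p_t\tr v_t - \E[\,p_t\tr v_t \,|\, S_t\,],
\]
where $S_t$ denotes the natural filtration up through the choice of $p_t$ and $X_t$ (both of which are $S_t$-measurable; the only remaining randomness in $v_t$ comes from the sampled index $(j_t,l_t)$). Freedman's inequality will give high-probability control of $\sumt \xi_t$, and a separate deterministic estimate from Lemma~\ref{lem:clip} will replace $\E[p_t\tr v_t\,|\,S_t]$ by $p_t\tr\mu_t$ up to an $O(\eta T)$ bias. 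The triangle inequality then combines the two into the desired $3\eta T$ bound.

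For Freedman I need a uniform bound on $|\xi_t|$ and on the conditional second moment $\E[\xi_t^2 \,|\, S_t]$. The uniform bound is immediate: clipping gives $\|v_t\|_\infty \leq 1/\eta$ and $\sumim p_t(i) \leq 1$, so $|p_t\tr v_t| \leq 1/\eta$, hence $|\xi_t| \leq 2/\eta$. The variance bound is the one delicate step, because the coordinates $v_t(1),\ldots,v_t(m)$ share the single sampled entry $(j_t,l_t)$ and are therefore heavily correlated; treating them as independent would be unsound. I plan to sidestep this by collapsing $p_t\tr v_t$ into a single scalar and applying Cauchy--Schwarz against $\sumim p_t(i) \leq 1$:
\[
(p_t\tr v_t)^2 \leq \bigl(\sumim p_t(i)\bigr)\bigl(\sumim p_t(i) v_t(i)^2\bigr) \leq \sumim p_t(i)\,\tilde{v}_t(i)^2,
\]
the last inequality being just $v_t(i)^2 \leq \tilde{v}_t(i)^2$ from clipping. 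Taking conditional expectation and using $\var[\tilde{v}_t(i)\,|\,S_t]\leq 1$ together with $|\mu_t(i)|\leq C$ gives $\E[(p_t\tr v_t)^2\,|\,S_t] \leq 1 + C^2$, a constant independent of $m$. Plugging $V = 2/\eta$ and $s = 1+C^2$ into Freedman yields $\ProbP(|\sumt \xi_t| \geq \eta T) \leq 2\exp(-\Omega(\eta^2 T))$, which is at most $1/m$ under the standing assumption $\eta \geq \sqrt{4\log(2m^2)/T}$ (with the constants absorbed exactly as in Lemma~\ref{lem:conc1_generic}).

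For the bias step, the assumption $2C \leq 1/\eta$ ensures each $\mu_t(i)$ lies well inside the clipping window, so Lemma~\ref{lem:clip} applies and gives $|\E[v_t(i)\,|\,S_t] - \mu_t(i)| \leq 2\eta\var[\tilde{v}_t(i)\,|\,S_t] \leq 2\eta$ coordinatewise. Averaging against the nonnegative weights $p_t(i)$ (with $\sumim p_t(i) \leq 1$) and summing over $t$ produces the deterministic estimate $|\sumt p_t\tr\E[v_t\,|\,S_t] - \sumt p_t\tr\mu_t| \leq 2\eta T$. Combined with $|\sumt \xi_t| \leq \eta T$ by the triangle inequality, this closes the argument at total deviation $3\eta T$ and failure probability $1/m$.

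The main obstacle is the variance estimate in the second paragraph: a naive per-coordinate Freedman-plus-union-bound over $m$ coordinates both costs an extra $\log m$ factor and, more importantly, fails to reflect the fact that $p_t\tr v_t$ is a single scalar martingale. The Cauchy--Schwarz collapse is what keeps the second moment independent of $m$ despite the coordinates of $v_t$ being perfectly correlated through the shared sample $(j_t,l_t)$; once that step is in place, the remainder is parallel to the coordinatewise lemma without the union bound.
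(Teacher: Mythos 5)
Your overall route is the paper's: the same scalar martingale $\xi_t = p_t\tr v_t - \E[p_t\tr v_t \,|\, S_t]$ with filtration $S_t=(X_t,p_t)$, the uniform bound $|\xi_t|\le 2/\eta$, Freedman's inequality, the $2\eta T$ bias correction via Lemma~\ref{lem:clip}, and the triangle inequality; your Cauchy--Schwarz collapse is exactly the paper's ``by convexity'' step, so the correlation between the coordinates of $v_t$ is handled identically. The one place you diverge is the second-moment bound, and there the argument has a genuine quantitative gap. You bound the \emph{uncentered} moment $\E[(p_t\tr v_t)^2\,|\,S_t]\le 1+C^2$ and assert the constants are ``absorbed exactly as in Lemma~\ref{lem:conc1_generic}.'' They are not: Freedman with $s=1+C^2$ and $V=2/\eta$ gives exponent $\frac{\eta^2 T/2}{(1+C^2)+2/3}$, so at the stated threshold $\eta=\sqrt{4\log(2m^2)/T}$ the failure probability is $2(2m^2)^{-2/(1+C^2+2/3)}$, which already for the paper's choice $C=2$ is not $O(1/m)$. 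Worse, the lemma only assumes $2C\le 1/\eta$, so $C^2$ may be as large as $1/(4\eta^2)$; then $Ts\approx T/(4\eta^2)$ dominates the denominator, the exponent degrades to order $\eta^4 T$, which is $o(\log m)$ in the relevant regime, and the concentration claim becomes vacuous. So as written the proof does not establish the lemma with its stated constants, and fails outright at the allowed extreme of $C$.

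The missing idea is to use that $\xi_t$ is centered, so you need the conditional \emph{variance}, and clipping kills the $\mu$-contribution: since $|\mu_t(i)|\le C\le 1/(2\eta)$, the value $\mu_t(i)$ lies strictly inside the clipping window $[-1/\eta,1/\eta]$, hence clipping moves $\tilde{v}_t(i)$ only closer to $\mu_t(i)$ and $\E[(v_t(i)-\mu_t(i))^2\,|\,S_t]\le \E[(\tilde{v}_t(i)-\mu_t(i))^2\,|\,S_t]=\var[\tilde{v}_t(i)\,|\,S_t]\le 1$, so $\var[v_t(i)\,|\,S_t]\le 1$ as well. Applying your Cauchy--Schwarz step to the centered vector $v_t-\E[v_t\,|\,S_t]$ then gives $\E[\xi_t^2\,|\,S_t]\le \sumim p_t(i)\var[v_t(i)\,|\,S_t]\le 1$, independent of $C$, which restores the exponent $\eta^2T/4$ and makes the stated range of $\eta$ and the $1-1/m$ guarantee go through. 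With that repair, the remainder of your write-up (the $|\xi_t|\le 2/\eta$ bound, the $2\eta T$ bias from Lemma~\ref{lem:clip}, and the final $3\eta T$) coincides with the paper's proof.
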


\begin{proof}
This Lemma is proven in essentially the same manner as Lemma \ref{lem:conc1_generic}, and proven below for completeness.\\
Consider the martingale difference sequence $\xi_t = p_t^{\top}v_t - \mathbb{E}[p_t^{\top}v_t]$ with respect to the filtration $S_t = (X_t, p_t)$. \\
It holds for all $t$ that $\vert{\xi_t}\vert \leq \frac{2}{\eta}$. Also by convexity it holds that $\mathbb{E}[\xi_t^2 \, | \, S_t] = \mathbb{E}[(p_t^{\top}v_t)^2 \, | \, S_t] \leq \sumim{}p_t(i)\mathbb{E}[v_t(i)^2 \, | \, S_t] \leq 1$.\\
Applying Freedman's inequality we have,
\begin{eqnarray*}
P\left({\vert{\sum_{t=1}^T\xi_t}\vert \geq \eta{}T}\right) \leq & 2\exp\left({-\eta^2T/4}\right)
\end{eqnarray*}
Using lemma \ref{lem:clip} the fact that $v_t(i)$ is the clipping of $\tilde{v}_t(i)$ and the triangle inequality we have,

\begin{eqnarray*}
P\left({\vert{\sumtT{}p_t^{\top}v_t - p_t^{\top}\mu_t}\vert \geq 3\eta{}T}\right) \leq 2\exp\left({-\eta^2T/4}\right)
\end{eqnarray*}
Thus for $\eta \geq \sqrt{\frac{4\log{(2m^2)}}{T}}$ the lemma follows.
\end{proof}

\begin{lemma} \label{lem:conc3_generic}
For $\frac{1}{2C} \geq \eta \geq \sqrt{\frac{10C\log{(2m)}}{T}}$, with probability at least $1-1/m$,
\begin{eqnarray*}
\abs{ \sumt \mu_t(i_t) - \sumt p_t \tr \mu_t }  \le \eta T  .
\end{eqnarray*}
\end{lemma}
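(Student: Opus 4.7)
The plan is to apply Freedman's inequality to a martingale difference sequence that captures the deviation of $\mu_t(i_t)$ from its conditional expectation, mimicking the structure of the proofs of Lemmas \ref{lem:conc1_generic} and \ref{lem:conc2_generic}.

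First I would fix the filtration $S_t = (X_t, p_t)$, so that $p_t$ and the entire vector $\mu_t$ are $S_t$-measurable while $i_t$ is a fresh draw from $p_t$ (with leftover mass $1 - \sum_{i\in[m]} p_t(i)$ falling on $i_t = m+1$, where $\mu_t(m+1) = 0$ because $A_{m+1} = \mathbf{0}$ and $b_{m+1} = 0$). Under this filtration $\E[\mu_t(i_t) \mid S_t] = \sum_{i} p_t(i)\mu_t(i) = p_t^{\top} \mu_t$, so the sequence
\[
\xi_t \;:=\; \mu_t(i_t) - p_t^{\top} \mu_t
\]
is a martingale difference with respect to $\{S_t\}$, and the quantity we want to control is $\abs{\sumt \xi_t}$.

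Next I would bound the per-step magnitude and the conditional variance of $\xi_t$. Since $|\mu_t(i)| \leq C$ for every $i$ and $p_t$ lies in the simplex, the triangle inequality gives $|\xi_t| \leq 2C \leq 1/\eta$. For the variance,
\[
\E[\xi_t^2 \mid S_t] \;\leq\; \E[\mu_t(i_t)^2 \mid S_t] \;=\; \sum_{i} p_t(i)\,\mu_t(i)^2 \;\leq\; C\cdot\sum_i p_t(i)|\mu_t(i)| \;\leq\; C^2 .
\]
Applying Freedman's inequality with $V = 2C$, $s = C^2$, and deviation level $\epsilon = \eta T$ then yields
\[
P\!\left(\abs{\tsum_t \xi_t} \geq \eta T\right) \;\leq\; 2\exp\!\left(-\frac{\eta^2 T}{2C^2 + (4/3)C\eta}\right).
\]

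Finally, I would use $C\eta \leq 1/2$ to absorb the $(4/3)C\eta$ term in the denominator into a small multiple of $C^2$, and then substitute the hypothesis $\eta \geq \sqrt{10 C \log(2m)/T}$ to conclude that the right hand side is at most $1/m$, which is the claim. The only delicate part of the argument is the bookkeeping of constants in the Freedman exponent under the standing assumptions $1 \leq C$ and $2C \leq 1/\eta$; this is a purely arithmetic check, and is exactly what fixes the specific constant $10$ appearing in the hypothesis on $\eta$. Note that, unlike Lemmas \ref{lem:conc1_generic} and \ref{lem:conc2_generic}, no clipping lemma is invoked here since $\mu_t$ is already deterministic given $S_t$ and the only randomness is the selection of $i_t$.
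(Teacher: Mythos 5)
Your proposal is correct and follows essentially the same route as the paper: the same martingale difference $\xi_t = \mu_t(i_t) - p_t\trans\mu_t$ with filtration $S_t=(X_t,p_t)$, the bound $|\xi_t|\le 2C$, a conditional variance bound, and Freedman's inequality with deviation $\eta T$, followed by the same arithmetic use of $C\ge 1$ and $\eta\le 1/(2C)$. Your only (harmless) deviation is a slightly sharper variance bound of $C^2$ instead of the paper's $4C^2$ (obtained there via $2a^2+2b^2$), which if anything makes the final constant bookkeeping easier than in the paper's own proof.
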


\begin{proof}
Consider the martingale difference $\xi_t = \mu_t(i_t) -  p_t \tr \mu_t $, where now $\mu_t$ is a constant vector and $i_t$ is the random variable, and consider the filtration given by $S_t = (X_t, p_t)$.\\
The expectation of $\mu_t(i_t)  $, conditioning on $S_t$ with respect to the random choice of the index $i_t$, is $p_t \tr \mu_t$. Hence $\E_t[\xi_t \, | \, S_t] = 0$. \\
It holds that $\vert{\xi_t}\vert \leq |\mu_t(i)| + | p_t \tr \mu_t| \leq 2C$.  Also
$\E[\xi_t^2] = \E[ (\mu_t(i)  - p_t \tr \mu_t)^2 ] \leq 2 \E[ \mu_t(i)^2] + 2 (p_t \tr \mu_t)^2 \leq 4C^2$. \\
Applying Freedman's inequality gives,
\begin{eqnarray*}
P\left({\vert{\sum_{t=1}^T\xi_t}\vert \geq \eta{}T}\right) &\leq & 
2\exp\left({-\frac{\eta^2T^2/2}{4C^2T+2C\eta{}T/3}}\right) \\
&\leq & 2\exp\left({-\eta^2T/(10C^2)}\right)
\end{eqnarray*}
where for the last inequality we use $C \geq 1$ and $\eta \leq \frac{1}{C}$. \\
Thus for $\eta \geq \sqrt{\frac{10C\log{(2m)}}{T}}$ the lemma follows.
\end{proof}

Setting $C=2$ and $\eta = \sqrt{\frac{40\log{m}}{T}}$ lemma \ref{lem:conc1_generic} yields part $(i)$ of lemma \ref{lemma:concentration} and combining combining lemmas \ref{lem:conc2_generic} and \ref{lem:conc3_generic} via the triangle inequity yields part $(ii)$ of lemma \ref{lemma:concentration}.

\bibliography{bib}{}
\bibliographystyle{unsrt}

\end{document}